\RequirePackage{fix-cm}
\documentclass[envcountsame,envcountsect]{svjour3}
\usepackage{mathptmx,amsmath,amssymb,graphicx}
\usepackage[colorlinks,linkcolor=red,anchorcolor=blue,citecolor=blue]{hyperref}
\usepackage{epstopdf}
\epstopdfsetup{update}

\smartqed

\spnewtheorem{notation}{Notation}[section]{\bf}{\it}

\begin{document}

\title{Swapping algebra, Virasoro algebra and discrete integrable system
\thanks{The research leading to these results has received funding from the European Research Council under the {\em European Community}'s seventh Framework Programme
(FP7/2007-2013)/ERC {\em  grant agreement}.}
}
\dedication{To William Goldman on the occasion of his sixtieth birthday}
\author{Zhe Sun}
\institute{Z. Sun \at
              Yau Mathematical Sciences Center, Tsinghua University, Beijing, 100084, China \\
              \email{sunzhe1985@gmail.com}           }

\maketitle

\begin{abstract}
We induce a Poisson algebra $\{\cdot,\cdot\}_{\mathcal{C}_{n,N}}$ on the configuration space $\mathcal{C}_{n,N}$ of $N$ twisted polygons in $\mathbb{RP}^{n-1}$ from the swapping algebra \cite{L12}, which is found coincide with Faddeev-Takhtajan-Volkov algebra for $n=2$. There is another Poisson algebra $\{\cdot,\cdot\}_{S2}$ on $\mathcal{C}_{2,N}$ induced from the first Adler-Gelfand-Dickey Poissson algebra by Miura transformation. By observing that these two Poisson algebras are asymptotically related to the dual to the Virasoro algebra, finally, we prove that $\{\cdot,\cdot\}_{\mathcal{C}_{2,N}}$ and  $\{\cdot,\cdot\}_{S2}$ are Schouten commute.

\keywords{Swapping algebra \and configuration space of N twisted polygons \and Virasoro algebra \and Poisson-Lie group \and bihamiltonian.}

\subclass{53D30 \and 05E99}

\end{abstract}

\section{Introduction}
The space $\mathcal{L}_{n}$ of ordinary differential operators of the form $\partial_{n} + u_2 \partial_{n-2} + ... + u_{n}$ has two Poisson structures, called the first(second resp.) Adler-Gelfand-Dickey Poissson structure \cite{A79} \cite{GD87}. These two Poisson structures are realized by Drinfeld-Sokolov reductions \cite{DS85} via the hamiltonian reductions of the dual space to the affine Kac-Moody algebra $\widehat{\operatorname{gl}(n,\mathbb{R})}$. As a consequence, these two Poisson structures are compatible and provide a bihamiltonian system. We view the configuration space $\mathcal{C}_{n,N}$ of $N$-twisted polygons in $\mathbb{RP}^{n-1}$ as the discrete version of the space $\mathcal{L}_{n}$. For $n=3$, R. Schwartz, V. Ovsienko and S. Tabachnikov \cite{SOT10} introduced the Poisson structure $\{\cdot,\cdot\}_{S3}$ corresponding to the first Adler-Gelfand-Dickey Poissson structure defined on the coordinate function ring of $\mathcal{C}_{3,N}$. They show that the pentagram map relative to this Poisson structure is completely integrable. They conjecture that there exists a Poisson structure  $\{\cdot,\cdot\}_{C3}$ on the coordinate function ring of $\mathcal{C}_{3,N}$ corresponding to the second Adler-Gelfand-Dickey Poisson structure(it is also mentioned by B. Khsein, F. Soloviev in \cite{KS13} for $n$ in general and many others).
Moreover, they conjecture that $\{\cdot,\cdot\}_{C3}$ is compatible with their Poisson structure. 
It is very interesting to consider these two conjectures for $n$ in general. But the Poisson structure corresponding to the first Adler-Gelfand-Dickey Poissson bracket on $\mathcal{C}_{n,N}$ for $n>3$ is still missing. The latter conjecture is not clear even for $n=2$.

The swapping algebra \cite{L12}, introduced by F. Labourie, is used to characterize the Atiyah-Bott-Goldman Poisson structure \cite{AB83}\cite{G84} for the Hitchin component \cite{H92} and the second Adler-Gelfand-Dickey Poisson structure for the space of real opers. Taken linearity relations into consideration, the quotient algebra---rank $n$ swapping algebra \cite{Su16} is used to characterize the Fock--Goncharov Poisson structure \cite{FG06} on cluster $\mathcal{X}_{\operatorname{PSL}(n,\mathbb{R}),\hat{S}}$ moduli space in \cite{Su15}.
In this paper, for $n\geq 2$, we define the Poisson structures on the coordinate function ring of $\mathcal{C}_{n,N}$ induced from the swapping algebra \cite{L12}, the induced Poisson structures are corresponding to the second Adler-Gelfand-Dickey Poisson structures. For $n=2$, the Poisson algebra induced from the swapping algebra is the same as Takhtajan-Faddeev-Volkov algebra, which is studied by L. A. Takhtajan, L. D. Faddeev, A. Yu. Volkov in 90s \cite{FT86} \cite{FV93} \cite{V88} \cite{V92} and many others in lattice Virasoro algebra. 

We consider a Poisson bracket $\{\cdot,\cdot\}_{S2}$ for $\mathcal{C}_{2,N}$, which is a ``degenerate'' version of the R. Schwartz et al.'s Poisson bracket $\{\cdot,\cdot\}_{S3}$ for $\mathcal{C}_{3,N}$, induced from the discrete version of the first Adler-Gelfand-Dickey Poissson bracket by Miura transformation.
By studying the asymptotic behavior of these two Poisson structures, we have
\begin{proposition}{\sc[Proposition \ref{propswapvir} \ref{propschvir} ]}
We relate asymptotically the dual of the Poisson structures $\{\cdot,\cdot\}_{\mathcal{C}_{2,N}}$ and $\{\cdot,\cdot\}_{S2}$ to the Virasoro algebra.
\end{proposition}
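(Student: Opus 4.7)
The plan is to introduce a lattice spacing parameter $h = 2\pi/N$ and realize the coordinate functions on $\mathcal{C}_{2,N}$ as samples of a smooth function $u\colon S^1 \to \mathbb{R}$, obtained by perturbing an equispaced reference configuration on $\mathbb{RP}^1$. First I would write down both Poisson brackets in explicit form on a chosen system of local coordinates: for $\{\cdot,\cdot\}_{\mathcal{C}_{2,N}}$ this means using its identification with the Faddeev-Takhtajan-Volkov lattice Virasoro bracket already highlighted in the introduction, and for $\{\cdot,\cdot\}_{S2}$ it means pulling the discrete first Adler-Gelfand-Dickey bracket back through the discrete Miura map, producing a concrete finite-difference formula on the cross-ratio coordinates.

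Next I would Taylor-expand the discrete variables as $u_k = u(kh) + h\,\eta(kh) + O(h^2)$ around the equispaced reference and substitute into the brackets, reorganizing finite differences as approximations of $\partial_x$ and treating the Kronecker delta structure as an approximation of $\delta(x-y)$ weighted by $h$. Collecting the leading asymptotics in $h$, the swapping-induced bracket should reproduce the Kirillov-Kostant/Gelfand-Fuchs bracket
\begin{equation*}
\{u(x), u(y)\} = (u(x) + u(y))\,\delta'(x-y) + c\,\delta'''(x-y)
\end{equation*}
on the dual to the Virasoro algebra, in agreement with the second Adler-Gelfand-Dickey structure; and $\{\cdot,\cdot\}_{S2}$ should give the translation-invariant bracket $\{u(x),u(y)\} = \delta'(x-y)$, matching the first Adler-Gelfand-Dickey structure on the Virasoro dual. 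The central extension $\delta'''$ for the first bracket emerges from the subleading terms of the finite differences of the discrete cross-ratios, which is the natural mechanism by which lattice Virasoro algebras acquire their central charge in the continuum limit.

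The main obstacle is bookkeeping the asymptotic expansion deeply enough to capture the $\delta'''$ contribution with the correct coefficient: the central term is a third-order effect in $h$, so three successive orders of Taylor expansion of the lattice coordinates must be tracked, and the resulting multiple finite differences must be resummed into genuine third derivatives of the test function $u$. I would handle this by pairing the brackets against smooth test functions $\varphi,\psi$ on $S^1$, converting sums weighted by $h$ into Riemann integrals, and reading off the coefficients of $\varphi \psi'$, $\varphi'\psi$ and $\varphi\psi'''$ separately. Once the normalization of $h$ against the total length of the $N$-gon and the relative scale of the Miura variables are chosen consistently, the matching of both asymptotic brackets with the two canonical Poisson structures on the dual Virasoro algebra follows from a direct comparison of coefficients.
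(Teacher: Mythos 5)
Your overall strategy (expand around the ``round'' configuration, interpret finite differences as derivatives, and read off a continuum Virasoro-type bracket) is in the same spirit as the paper, but the paper works entirely in discrete Fourier modes rather than in position space with test functions: it sets $\mathcal{F}_p\mathbb{B}=\sum_k B_k e^{-2pk\pi i/N}$, normalizes $V_p=\mathcal{F}_p\mathbb{B}\cdot N/(8\pi i)$ (resp.\ $W_p$ with $16\pi i$), and verifies mode by mode that $\{V_p,V_q\}_{\mathcal{C}_{2,N}}$ and $\{W_p,W_q\}_{S2}$ agree with a ``$(t_1,t_2,N)$-Virasoro bracket'' $(p-q)I_{p+q}+\delta_{p,-q}(t_1p^3+t_2p)$ up to $o(1/N^2)$, with explicitly computed $(t_1,t_2)=(8\pi^2/N,\,8N)$ and $(8\pi^2/(3N),\,4-8N)$. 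Your position-space route could in principle be made to work, but two genuine gaps remain. First, the key quantitative input is missing: the statement only makes sense once you know how the cross-ratios degenerate to the reference value, namely $B_k=b_kb_{k+1}=4+4H_k/N^2+o(1/N^2)$ where $H_k$ is the discrete Hill potential (Proposition \ref{calH}). This fixes the $1/N^2$ scaling of the perturbation of the equispaced configuration and identifies the continuum field $u$ with the Hill potential; without it, neither the order at which each term appears nor the cocycle coefficients can be determined, and your ``$u_k=u(kh)+h\,\eta(kh)+O(h^2)$'' ansatz has the wrong scaling.

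Second, your predicted limit for $\{\cdot,\cdot\}_{S2}$ is not what Proposition \ref{propschvir} asserts. Writing $B_k=4+\epsilon_k$, the bracket $\{B_i,B_j\}_{S2}=(\delta_{i+1,j}-\delta_{i-1,j})B_iB_j$ decomposes as a constant part $16(\delta_{i+1,j}-\delta_{i-1,j})$ \emph{plus} a linear part $4(\epsilon_i+\epsilon_j)(\delta_{i+1,j}-\delta_{i-1,j})$; after the Fourier normalization the linear part produces the full Witt term $(p-q)W_{p+q}$, while the constant part is absorbed into the cocycle $t_1p^3+t_2p$. So $\{\cdot,\cdot\}_{S2}$ is \emph{not} asymptotically the bare translation-invariant bracket $\delta'(x-y)$: it is, like $\{\cdot,\cdot\}_{\mathcal{C}_{2,N}}$, asymptotically a Lie--Poisson-plus-cocycle structure on the Virasoro dual, and the two differ essentially only in the cocycle parameters. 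This is precisely the point of the proposition (and the evidence for the compatibility theorem that follows), so a proof that identifies the $S2$ limit as the first Adler--Gelfand--Dickey constant bracket alone would prove a different, weaker statement and would miss the actual content, namely the explicit values of $t_1$ and $t_2$ for both brackets.
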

With these evidences, we compare these two Poisson structures, we have our main theorem.
\begin{theorem}
The Poisson structures $\{\cdot,\cdot\}_{\mathcal{C}_{2,N}}$ and $\{\cdot,\cdot\}_{S2}$ for $\mathcal{C}_{2,N}$ are compatible.
\end{theorem}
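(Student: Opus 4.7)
The plan is to show compatibility by verifying that the pencil $\{\cdot,\cdot\}_{\mathcal{C}_{2,N}} + \lambda \{\cdot,\cdot\}_{S2}$ satisfies the Jacobi identity for all $\lambda \in \mathbb{R}$; since Jacobi holds individually for each bracket, the remaining content is the vanishing of the Schouten cross-term
\[
[\pi_{\mathcal{C}_{2,N}}, \pi_{S2}]_{SN} = 0,
\]
which unfolds on generators as
\[
\sum_{\mathrm{cyc}} \bigl( \{ f, \{g,h\}_{\mathcal{C}_{2,N}} \}_{S2} + \{ f, \{g,h\}_{S2} \}_{\mathcal{C}_{2,N}} \bigr) = 0.
\]

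First I would fix a common coordinate system on $\mathcal{C}_{2,N}$, namely the cross-ratios $x_i$ of consecutive quadruples of vertices of the twisted polygon, in which $\{\cdot,\cdot\}_{\mathcal{C}_{2,N}}$ recovers the Faddeev-Takhtajan-Volkov lattice Virasoro algebra through the swapping construction. I would then push $\{\cdot,\cdot\}_{S2}$, obtained by Miura transformation from the discrete first Adler-Gelfand-Dickey bracket, through the same substitution, producing explicit rational formulas for $\{x_i, x_j\}_{S2}$ on the same $N$ generators.

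Because both brackets are local along the polygon, in the sense that each $x_i$ Poisson-commutes nontrivially only with a bounded window of neighbors, the cross-Jacobi identity reduces, modulo the $\mathbb{Z}/N$ rotational symmetry, to finitely many configurations of triples $(i,j,k)$. I would enumerate these representatives and verify the identity case by case by direct expansion; the quadratic form of the Faddeev-Takhtajan-Volkov bracket keeps the computation manageable. The asymptotic correspondence with the Virasoro algebra from Proposition \ref{propswapvir} \ref{propschvir} serves as a strong a priori consistency check: in the scaling limit the pencil degenerates to the classical first/second Virasoro pair on $\mathrm{Vir}^*$, a well-known bihamiltonian structure, so any global obstruction would already surface at leading order.

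The main obstacle I anticipate is the mismatch of origins of the two brackets: one arises from Labourie's swapping algebra and the other from Miura pull-back of the discrete first AGD bracket, so reconciling them in a single coordinate system in which the cancellations become visible is the delicate technical step. Once that common description is in place, the verification is essentially a finite bookkeeping computation enabled by locality and the cyclic symmetry.
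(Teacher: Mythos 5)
Your proposal matches the paper's proof in both strategy and detail: the paper verifies the vanishing of the six-term mixed Jacobi expression $K(B_i,B_j,B_k)$ on the cross-ratio coordinates $B_k$, uses the explicit local formulas for both brackets together with the cyclic shift and reflection symmetries to reduce to a handful of representative triples (well-separated indices, $(1,2,3)$, $(1,2,4)$ with $N>5$ and $N=5$), and checks each by direct expansion. The only remaining work in your plan is actually carrying out that finite case check, which is exactly what the paper does.
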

We calculate some examples for $n=3$, the two Poisson structures $\{\cdot,\cdot\}_{\mathcal{C}_{3,N}}$ and $\{\cdot,\cdot\}_{S3}$ are not compatible. 

In our further study, we will modify the definition of the induced Poisson bracket $\{\cdot,\cdot\}_{\mathcal{C}_{3,N}}$ to make it work. We hope that this paper helps to understand the conjectures in general cases.
\section{Poisson algebra on the configuration space of N-twisted polygons}
In this section, we induce a Poisson algebra on the configuration space of $N$-twisted polygons in $\mathbb{RP}^{n-1}$ from the swapping algebra. We give explicit formulas for $n=2,3$. For $n=2$, the induced Poisson algebra coincides with Takhtajan-Faddeev-Volkov algebra. For $n=3$, the induced Poisson algebra coincides with a lattice $W_3$ algebra.

\subsection{Swapping algebra revisited}
\label{sa}
In this subsection, we recall briefly some definitions about the swapping algebra introduced by F. Labourie. Our definitions here are based on Section 2 of \cite{L12}.
\begin{definition}{\sc[linking number]}
Let $(r, x, s, y)$ be a quadruple of four points in $S^1$. The {\em linking number} between $rx$ and $sy$ is
\begin{equation}
\mathcal{J}(rx, sy) = \frac{1}{2} \cdot \left( \sigma(r-x) \cdot \sigma(r-y) \cdot \sigma(y-x)
- \sigma(r-x) \cdot \sigma(r-s ) \cdot \sigma(s-x)\right),
\end{equation}
such that for any $a \in \mathbb{R}$, we define $\sigma(a)$ as follows. Remove any point $o$ different from $r,x,s,y$ in $S^1$ in order to get an interval $]0,1[$. Then the points $r,x,s,y \in S^1$ correspond to the real numbers in $]0,1[$, $\sigma(a)= -1; 0; 1$ whenever $a < 0$; $a = 0$; $a > 0$ respectively.
\end{definition}
\begin{figure}\centering
\includegraphics[width=4in]{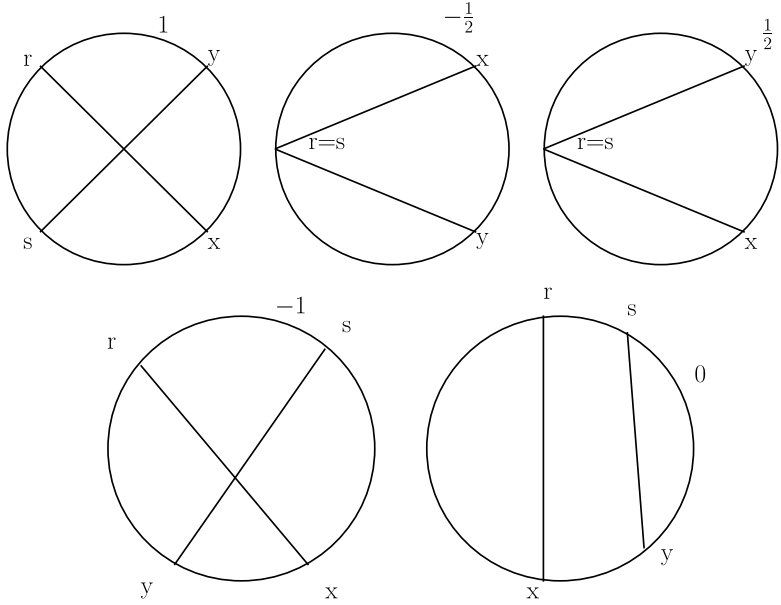}
\caption{Linking number between $rx$ and $sy$.}
\label{swapswap1}
\end{figure}
In fact, the value of $\mathcal{J}(rx, sy)$ belongs to $\{0, \pm1, \pm \frac{1}{2}\}$, depends on the corresponding positions of $r,x,s,y$, and does not depend on the choice of the point $o$. In Figure \ref{swapswap1}, we describe five possible values of $\mathcal{J}(rx, sy)$.

Let $\mathcal{P}$ be a finite subset of the circle $S^1$ provided with cyclic order. $\mathbb{K}$ is a characteristic zero field. We represent an ordered pair $(r, x)$ of $\mathcal{P}$ by the expression $rx$.

 \begin{definition}{\sc[swapping ring of $\mathcal{P}$]}
 {\em The swapping ring of $\mathcal{P}$} is the ring $$\mathcal{Z}(\mathcal{P}) := \mathbb{K}[\{xy\}_{\forall x,y \in \mathcal{P}}]/\{xx| \forall x \in \mathcal{P}\}$$ over $\mathbb{K}$, where $\{xy\}_{\forall x,y \in \mathcal{P}}$ are variables with values in $\mathbb{K}$.
 \end{definition}
 Notably, $rx = 0$ if $r = x$ in $\mathcal{Z}(\mathcal{P})$. Then we equip $\mathcal{Z}(\mathcal{P})$ with a Poisson bracket defined by F. Labourie in Section 2 of \cite{L12}.

\begin{definition}{\sc[swapping bracket]}
\label{defn2.2}
{\em The swapping bracket} over $\mathcal{Z}(\mathcal{P})$ is defined by extending the following formula on generators to $\mathcal{Z}(\mathcal{P})$ by {\em Leibniz's rule}:
\begin{equation}
\{rx, sy\} = \mathcal{J}(rx, sy) \cdot ry \cdot sx.
\end{equation}
(Here is the case for $\alpha = 0$ in Section 2 of \cite{L12}.)
\end{definition}
\begin{theorem}
\label{swapPoisson}
{\sc[F. Labourie \cite{L12}]} The swapping bracket as above verifies the Jacobi identity. So the swapping bracket defines a Poisson structure on $\mathcal{Z}(\mathcal{P})$.
\end{theorem}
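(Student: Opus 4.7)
My plan is the following. Because the swapping bracket is extended to all of $\mathcal{Z}(\mathcal{P})$ by Leibniz's rule in each slot, its Jacobiator
$$\operatorname{Jac}(a,b,c) := \{a,\{b,c\}\} + \{b,\{c,a\}\} + \{c,\{a,b\}\}$$
is automatically a tri-derivation on $\mathcal{Z}(\mathcal{P})$. Consequently it is enough to prove $\operatorname{Jac}(rx, sy, tz) = 0$ for arbitrary generators $rx, sy, tz$, which reduces an algebraic identity on a polynomial ring to a finite combinatorial check about six points on $S^1$.

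First I would expand each nested bracket using Definition~\ref{defn2.2} and Leibniz. For example,
$$\{rx,\{sy,tz\}\} = \mathcal{J}(sy,tz)\,\mathcal{J}(rx,sz)\cdot (rz)(sx)(ty) \;+\; \mathcal{J}(sy,tz)\,\mathcal{J}(rx,ty)\cdot (ry)(sz)(tx),$$
and similarly for the two cyclic analogues. Collecting the resulting six terms, one observes that they split into exactly two monomial orbits under the cyclic symmetry, namely the monomials $(rz)(sx)(ty)$ and $(ry)(sz)(tx)$. Vanishing of $\operatorname{Jac}(rx,sy,tz)$ is therefore equivalent to the two scalar identities
\begin{equation}\label{jacid1}
\mathcal{J}(sy,tz)\,\mathcal{J}(rx,sz) + \mathcal{J}(tz,rx)\,\mathcal{J}(sy,tx) + \mathcal{J}(rx,sy)\,\mathcal{J}(tz,ry) = 0,
\end{equation}
\begin{equation}\label{jacid2}
\mathcal{J}(sy,tz)\,\mathcal{J}(rx,ty) + \mathcal{J}(tz,rx)\,\mathcal{J}(sy,rz) + \mathcal{J}(rx,sy)\,\mathcal{J}(tz,sx) = 0,
\end{equation}
the second of which is obtained from the first by swapping the roles of the letters within each pair (or, equivalently, by applying the antisymmetry $\mathcal{J}(ab,cd) = -\mathcal{J}(ba,cd)$).

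Next I would verify \eqref{jacid1} and \eqref{jacid2} by a case analysis on the cyclic arrangement of the (at most six) points $r,x,s,y,t,z$ on $S^1$. The generic case is when all six points are distinct; then the cut-the-circle description of $\sigma$ in the definition of $\mathcal{J}$ reduces each $\mathcal{J}(\cdot,\cdot)$ to an element of $\{0,\pm 1\}$ determined purely by the cyclic order, and there are finitely many cyclic orders to check, many of which are equivalent under the simultaneous cyclic permutation $(r,x)\to(s,y)\to(t,z)\to(r,x)$ that already acts on each identity. The degenerate cases, in which two or more of the six points coincide, are handled analogously, now with $\mathcal{J}$ possibly taking the values $\pm\tfrac{1}{2}$; these cases follow by continuity/limiting from the generic check, or by a direct sub-case analysis.

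The main obstacle is the combinatorial bookkeeping in this last step: keeping track of the signs $\sigma(\cdot)$ in every cyclic arrangement of six points and making sure no case is missed, especially the degenerate ones where $\mathcal{J}$ takes half-integer values. However, since $\mathcal{J}(rx,sy)$ depends only on the cyclic order of $r,x,s,y$, the analysis is finite and purely elementary; the cyclic symmetry of the identities \eqref{jacid1}--\eqref{jacid2} and the antisymmetry $\mathcal{J}(ab,cd) = \mathcal{J}(cd,ab)$ (evident from the definition) cut the workload by a factor of six. This matches the proof given by Labourie in Section~2 of \cite{L12}, which we invoke.
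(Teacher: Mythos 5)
The paper does not actually prove this theorem: it is quoted from Labourie and the only ``proof'' in the text is the citation to Section 2 of \cite{L12}. Measured against that, your proposal is a genuine reconstruction of the argument, and its skeleton is correct and is essentially Labourie's: antisymmetry of the bracket makes the Jacobiator a tri-derivation, so it suffices to check it on generators; expanding by Leibniz, the six terms group into the two monomials $(rz)(sx)(ty)$ and $(ry)(sz)(tx)$, and the Jacobi identity reduces to the two scalar identities you display, each a finite check on the cyclic configuration of at most six points. I verified that your two coefficient identities are the correct ones and that the second is indeed the image of the first under swapping letters within each pair. Two caveats. First, your parenthetical ``antisymmetry $\mathcal{J}(ab,cd)=\mathcal{J}(cd,ab)$'' has the wrong sign: the correct relations are $\mathcal{J}(ba,cd)=-\mathcal{J}(ab,cd)$ and $\mathcal{J}(ab,cd)=-\mathcal{J}(cd,ab)$, and the latter is precisely what makes the swapping bracket antisymmetric --- without which the ``Jacobiator is a tri-derivation'' step does not go through, so you should state and check it rather than wave at it. Second, the actual content of the theorem lives in the case analysis you defer (including the degenerate configurations where $\mathcal{J}$ takes values $\pm\tfrac12$ and where the two monomials may coincide or vanish, so that only a combined identity is forced); since you ultimately invoke \cite{L12} for that step, your write-up is an honest outline plus citation rather than a self-contained proof, which puts it on the same footing as the paper itself.
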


\begin{definition}{\sc[swapping algebra of $\mathcal{P}$]}
{\em The swapping algebra of $\mathcal{P}$} is $\mathcal{Z}(\mathcal{P})$ equipped with the swapping bracket.
\end{definition}

\begin{definition}{\sc[swapping fraction algebra of $\mathcal{P}$]}
The {\em swapping fraction algebra of $\mathcal{P}$} is fraction ring $\mathcal{Q}(\mathcal{P})$ of $\mathcal{Z}(\mathcal{P})$ equipped with the swapping bracket.
\end{definition}
\begin{definition}{\sc[Cross fraction]}
Let $x,y,z,t$ belong to $\mathcal{P}$ so that $x \neq t$ and $y \neq z$. The {\em cross fraction} determined by $(x,y,z,t)$ is the element of $\mathcal{Q}(\mathcal{P})$:
\begin{equation}
[x,y,z,t] :=\frac{xz}{xt} \cdot \frac{yt}{yz}.
\end{equation}

Let $\mathcal{CR}(\mathcal{P}) = \{[x,y,z,t] \in \mathcal{Q}(\mathcal{P}) \; |\; \forall x,y,z,t \in \mathcal{P}, \; x \neq t, y \neq z\}$ be the set of all the cross-fractions in $\mathcal{Q}(\mathcal{P})$.
\end{definition}

\begin{definition}{\sc[swapping multifraction algebra of $\mathcal{P}$]}
Let $\mathcal{B}(\mathcal{P})$ be the subring of $\mathcal{Q}(\mathcal{P})$ generated by $\mathcal{CR}(\mathcal{P})$.
{\em The swapping multifraction algebra of $\mathcal{P}$} is $\mathcal{B}(\mathcal{P})$ equipped with the swapping bracket.
\end{definition}

\subsection{Induced Poisson structures on the configuration spaces}
\begin{definition}{\sc[Configuration space $\mathcal{C}_n$]}
Let $f$ be a map from $\mathbb{Z}$ to $\mathbb{RP}^{n-1}$ such that for any $i_1<...<i_n$ in $
\mathbb{Z}$, the images $f(i_1),...,f(i_n)$ are in general position in $\mathbb{RP}^{n-1}$. The configuration space $\mathcal{C}_n$ is the space of all these maps up to projective transformations.
\end{definition}

\begin{definition}{\sc[Configuration space of $N$-twisted polygons in $\mathbb{RP}^{n-1}$]}
For $N>n$, a {\em $N$-twisted polygon in $\mathbb{RP}^{n-1}$} is a map $f$ from $\mathbb{Z}$ to $\mathbb{RP}^{n-1}$ such that for any $k \in \mathbb{Z}$, we have $f(k+N) = M_f \cdot f(k)$ where the {\em monodromy} $M_f$ belongs to $\operatorname{PSL}(n,R)$. We say that $f$ is {\em in general position} if for any $1\leq i_1<...<i_n\leq N$, the images $f(i_1),...,f(i_n)$ in $\mathbb{RP}^{n-1}$ are in general position.
 The {\em configuration space of $N$-twisted polygons in $\mathbb{RP}^{n-1}$}, denoted by $\mathcal{C}_{n,N}$, is the space of the $N$-twisted polygons in general position in $\mathbb{RP}^{n-1}$ up to projective transformations.
\end{definition}
\begin{definition}
{\sc[Induced Poisson structure for $\mathcal{C}_{n}$]}
Without loss of ambiguity, let $\mathcal{P}=\{...,r_1,r_2,...\}$, the swapping multifraction algebra of $\mathcal{P}$ is $(\mathcal{B}(\mathcal{P}),\{,\})$. For each configuration $f$ in general position, we associate $f$ to a map $f_{n-1}$, where $f_{n-1}(r_k) = f(r_k) \wedge ...\wedge f(r_{k+n-2}) \in \mathbb{RP}^{n-1*}$ for any $k \in \mathbb{Z}$. We lift $f$ ($f_{n-1}$ resp.) to the map $\widetilde{f}$ ($\widetilde{f_{n-1}}$ resp.) with images in $\mathbb{R}^n$ ($\mathbb{R}^{n*}$ resp.). Let $\Omega$ be a volume form of $\mathbb{R}^n$. Let $\mathcal{B}'(\mathcal{P})$ be a subfraction ring of $\mathcal{Q}(\mathcal{P})$ generated by $\mathcal{CR}'(\mathcal{P})$, where $\mathcal{CR}'(\mathcal{P})$ is the set of the cross fractions such that for each factor $ab$ of the cross fraction we have $\widetilde{f}(a) \wedge \widetilde{f_{n-1}}(b)\neq 0$. We define a homomorphism $\theta$ from $\mathcal{B}'(\mathcal{P})$ to $C^\infty(\mathcal{C}_{n})$ by extending the following formula on generators:
$$\theta\left(\frac{ac}{ad} \cdot \frac{bd}{bc}\right)=\frac{\Omega\left(\widetilde{f}(a)\wedge \widetilde{f_{n-1}}(c)\right)}{\Omega\left(\widetilde{f}(a)\wedge \widetilde{f_{n-1}}(d)\right)} \cdot \frac{\Omega\left(\widetilde{f}(b)\wedge \widetilde{f_{n-1}}(d)\right)}{\Omega\left(\widetilde{f}(b)\wedge \widetilde{f_{n-1}}(c)\right)},$$
which does not depend on the lifts and the volume form $\Omega$.

Then the {\em induced Poisson bracket $\left\{,\right\}_{\mathcal{C}_n}$ on $\theta(\mathcal{B}'(\mathcal{P}))$} is 
$$\left\{\theta(e),\theta(f)\right\}_{\mathcal{C}_n} = \theta\left(\{e,f\}\right),$$
for any $e,f \in \mathcal{B}'(\mathcal{P})$.
\end{definition}
\begin{remark}
\begin{enumerate}
\item
We can modify $f_{n-1}$ such that $f_{n-1}(r_k) = f(r_{k}) \wedge  f(r_{k+j_1}) \wedge...\wedge f(r_{k+j_{n-2}}) \in \mathbb{RP}^{n-1*}$, where $j_1<...<j_{n-2}$ in $\mathbb{Z}$, to get different induced Poisson brackets.
\item The kernel of $\theta \circ \psi$ is characterized by the $(n+1) \times (n+1)$ determinant relations in \cite{Su16}. 
\end{enumerate}
\end{remark}
\begin{definition}
{\sc[Induced Poisson structure for $\mathcal{C}_{n,N}$]}
The natural embedding $\psi$ from $\mathcal{C}_{n,N}$ to $\mathcal{C}_{n}$ induces a map $\psi$ from $C^\infty(\mathcal{C}_{n})$ to $C^\infty(\mathcal{C}_{n,N})$,  the {\em induced Poisson bracket $\left\{,\right\}_{\mathcal{C}_{n,N}}$ on $\psi(\theta(\mathcal{B}'(\mathcal{P})))$}  is 
$$\left\{\psi(g),\psi(h)\right\}_{\mathcal{C}_{n,N}} = \psi\left(\{g,h\}_{\mathcal{C}_n}\right),$$
for any $g,h \in \theta(\mathcal{B}'(\mathcal{P}))$.
\end{definition}
\subsubsection{Case n=2}
\begin{notation}
Let
\begin{equation}
[a, b, c, d]:=\frac{a-c}{a-d}\cdot \frac{b-d}{b-c},
\end{equation}
\begin{equation}
[x,y]:=\frac{\{x,y\}}{xy}.
\end{equation}
\end{notation}

\begin{definition}{\sc[coordinate system of $\mathcal{C}_{2,N}$]}
For any $f \in \mathcal{C}_{2,N}$, suppose $f(k)= [f_k:1] \in \mathbb{RP}^1$, let
\begin{equation}
B_k = [f_{k-1}, f_{k+2}, f_{k+1}, f_{k}].
\end{equation}
We have $\{B_k\}_{k=1}^N$ is a coordinate system of $\mathcal{C}_{2,N}$.
\end{definition}
\begin{proposition}
\label{propc2}
{\sc[Formulas for $\{,\}_{\mathcal{C}_{2,N}}$]}
We have
\begin{equation}
\{B_k, B_{k+1}\}_{\mathcal{C}_{2,N}} =  \left(1- \frac{1}{B_k} - \frac{1}{B_{k+1}}\right) B_k \cdot B_{k+1},
\end{equation}
\begin{equation}
\{B_k, B_{k-1}\}_{\mathcal{C}_{2,N}} = - \left(1- \frac{1}{B_k} - \frac{1}{B_{k-1}}\right) B_k \cdot B_{k-1},
\end{equation}
\begin{equation}
\{B_k, B_{k+2}\}_{\mathcal{C}_{2,N}} =  - \frac{1}{B_{k+1}} \cdot B_k \cdot B_{k+2},
\end{equation}
\begin{equation}
\{B_k, B_{k-2}\}_{\mathcal{C}_{2,N}} =   \frac{1}{B_{k-1}} \cdot B_k \cdot B_{k-2},
\end{equation}
for $k =1,...,N$ with the convention $k+N = k$.
 For the other cases
\begin{equation}
\{B_i, B_{j}\}_{\mathcal{C}_{2,N}} = 0.
\end{equation}
\end{proposition}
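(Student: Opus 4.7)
The plan is to compute $\{B_k, B_{k+j}\}_{\mathcal{C}_{2,N}}$ by lifting each coordinate to an element of the swapping multifraction algebra and expanding the swapping bracket with Leibniz. For $n=2$ one has $f_{n-1}=f$, so $\theta$ sends each generator $r_ir_j$ to the scalar difference $\widetilde{f}(r_i)-\widetilde{f}(r_j)$, and
\[
B_k \;=\; \psi\,\theta(C_k), \qquad C_k := \frac{r_{k-1}r_{k+1}}{r_{k-1}r_k}\cdot\frac{r_{k+2}r_k}{r_{k+2}r_{k+1}}.
\]
I would introduce the logarithmic bracket $[X,Y]:=\{X,Y\}/(XY)$ on $\mathcal{Q}(\mathcal{P})$; by Leibniz it is skew-symmetric and satisfies $[X_1X_2,Y]=[X_1,Y]+[X_2,Y]$ and $[X^{-1},Y]=-[X,Y]$. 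Hence $[C_k,C_{k+j}]$ expands as a signed sum of sixteen \emph{chord-brackets}
\[
\bigl[r_a r_b,\,r_c r_d\bigr] \;=\; \mathcal{J}(r_a r_b, r_c r_d)\cdot \frac{r_a r_d\cdot r_c r_b}{r_a r_b\cdot r_c r_d},
\]
each of which becomes, after applying $\theta$, a monomial in the $B_m$'s and their inverses.

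Next, I would read off the linking numbers $\mathcal{J}$ directly from the cyclic order $\ldots, r_{k-1}, r_k, r_{k+1}, r_{k+2}, \ldots$ on $S^1$. For $|j|\geq 3 \pmod{N}$ the four-element index sets $\{k-1,k,k+1,k+2\}$ and $\{k+j-1,k+j,k+j+1,k+j+2\}$ are disjoint and occupy two separated arcs of $S^1$; no pair of chords interleaves, so every linking number vanishes, yielding the last formula $\{B_i,B_j\}_{\mathcal{C}_{2,N}}=0$.

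For $j=\pm1,\pm2$ only a small number of the sixteen terms survive. The next step is to tabulate them, recording for each the sign $\epsilon_\alpha\epsilon_\beta\in\{\pm1\}$ coming from the numerator/denominator status of the relevant factor of $C_k$ and $C_{k+j}$, the linking value in $\{\pm1,\pm1/2\}$, and the resulting cross-fraction after $\theta$. Skew-symmetry of $[\cdot,\cdot]$ then reduces $j=-1,-2$ to $j=+1,+2$ up to a global sign, which accounts for the sign discrepancy between the two pairs of formulas. The final simplification identifies the sum of the surviving $\theta$-images with $(1-B_k^{-1}-B_{k+1}^{-1})B_kB_{k+1}$ and $-B_{k+1}^{-1}B_kB_{k+2}$ respectively; this uses repeatedly the cross-ratio identity $[a,b,c,d]+[a,c,b,d]=1$, which collapses five- and six-point cross-ratios into polynomial expressions in the $B_m$ alone.

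The main obstacle is combinatorial rather than conceptual: faithfully enumerating the sixteen contributions in each case with the correct signs and linking values, and then executing the cross-ratio cancellations that reassemble them into the compact forms claimed. Beyond the definition of the swapping bracket and elementary cross-ratio identities, no deeper ingredient is needed.
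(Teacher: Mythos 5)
Your overall route is the same as the paper's: lift $B_k$ to the multifraction $C_k=\frac{r_{k-1}r_{k+1}}{r_{k-1}r_k}\cdot\frac{r_{k+2}r_k}{r_{k+2}r_{k+1}}$, expand the logarithmic bracket $[X,Y]=\{X,Y\}/(XY)$ by Leibniz into the sixteen chord-brackets, and push the surviving cross-fractions through $\psi\circ\theta$ to identify them with $1-B_k^{-1}$, $1-B_{k+1}^{-1}$, $B_{k+1}^{-1}$. For $j=\pm1,\pm2$ your plan reproduces exactly the two displayed identities the paper computes, and the skew-symmetry reduction of $j=-1,-2$ to $j=+1,+2$ is correct; what remains there is bookkeeping that you have not executed but that clearly goes through.

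There is, however, a concrete error in your vanishing argument for the ``other cases.'' For $j=3$ (and symmetrically $j=N-3$) the index sets $\{k-1,k,k+1,k+2\}$ and $\{k+2,k+3,k+4,k+5\}$ are \emph{not} disjoint: they share $r_{k+2}$. Chords sharing an endpoint do not interleave, but in the paper's definition their linking number is $\pm\tfrac12$, not $0$ (this is one of the five pictured values), so ``every linking number vanishes'' is false here. The bracket $[C_k,C_{k+3}]$ does vanish, but only after a cancellation: the four shared-endpoint terms are
\begin{equation*}
[r_{k+2}r_k,r_{k+2}r_{k+4}]-[r_{k+2}r_k,r_{k+2}r_{k+3}]-[r_{k+2}r_{k+1},r_{k+2}r_{k+4}]+[r_{k+2}r_{k+1},r_{k+2}r_{k+3}],
\end{equation*}
and since $[r_ar_b,r_ar_c]=\mathcal{J}(r_ar_b,r_ar_c)$ is the constant $-\tfrac12$ for all four pairs, the signed sum is $0$ (when $N=6$ the two blocks share both endpoints and one must run the same cancellation at $r_{k-1}=r_{k+5}$ as well). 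Your disjointness argument is valid only for $4\le j\le N-4$. This is fixable, but as written the step fails, and the same half-integer contributions are exactly what you must track carefully in the $j=\pm2$ tabulation, where shared endpoints also occur.
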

\begin{proof}
By calculating the swapping bracket, we have the following non-trivial equations
\begin{equation}
\begin{aligned}
&\left[\frac{r_{k-1} r_{k+1}}{r_{k-1} r_{k}}\cdot\frac{r_{k+2} r_{k}}{r_{k+2} r_{k+1}},\frac{r_{k} r_{k+2}}{r_{k} r_{k+1}} \cdot \frac{r_{k+3} r_{k+1}}{r_{k+3} r_{k+2}}\right]
\\&= -1 + \frac{r_{k-1}r_{k+2}\cdot r_{k}r_{k+1}}{r_{k-1}r_{k+1} \cdot r_{k}r_{k+2}} + \frac{r_{k+2}r_{k+1}\cdot r_{k+3}r_{k}}{r_{k+2}r_{k} \cdot r_{k+3}r_{k+1}},
\end{aligned}
\end{equation}
  \begin{equation}
\begin{aligned}
&\left[\frac{r_{k-1}r_{k+1}}{r_{k-1}r_{k}} \cdot\frac{r_{k+2}r_{k}}{r_{k+2}r_{k+1}}, \frac{r_{k+1}r_{k+3}}{r_{k+1}r_{k+2}}\cdot \frac{r_{k+4}r_{k+2}}{r_{k+4}r_{k+3}}\right]
\\&= -\frac{r_{k+2}r_{k+3}\cdot r_{k+1}r_{k}}{r_{k+2}r_{k} \cdot r_{k+1}r_{k+3}}.
\end{aligned}
\end{equation}
Since 
$$\psi\left(\theta\left(\frac{r_{k-1} r_{k+1}}{r_{k-1} r_{k}}\cdot\frac{r_{k+2} r_{k}}{r_{k+2} r_{k+1}}\right)\right)=B_k,$$
$$\psi\left(\theta\left(\frac{r_{k-1}r_{k+2}\cdot r_{k}r_{k+1}}{r_{k-1}r_{k+1} \cdot r_{k}r_{k+2}}\right)\right)=1-\frac{1}{B_{k}},$$
$$\psi\left(\theta\left(\frac{r_{k+2}r_{k+1}\cdot r_{k+3}r_{k}}{r_{k+2}r_{k} \cdot r_{k+3}r_{k+1}}\right)\right)=1-\frac{1}{B_{k+1}},$$
$$\psi\left(\theta\left(\frac{r_{k+2}r_{k+3}\cdot r_{k+1}r_{k}}{r_{k+2}r_{k} \cdot r_{k+1}r_{k+3}}\right)\right)=\frac{1}{B_{k+1}},$$
we conclude that we verify the formulas in the proposition.
\qed
\end{proof}

\begin{remark}
Faddeev-Takhtajan-Volkov algebra coincides with $\{\cdot,\cdot\}_{\mathcal{C}_{2,N}}$, but the original paper \cite{FT86} works on the coordinate system $\{s_i= \frac{1}{B_i}\}_{i=1}^N$ of $\mathcal{C}_{2,N}$. 
\end{remark}
\subsubsection{Case n=3}
\begin{definition}{\sc[coordinate system of $\mathcal{C}_{3,N}$ \cite{SOT10}]}
For any $f \in \mathcal{C}_{3,N}$, let
\begin{equation}\small 
X_k = \left[f(k-2), f(k-2)f(k-1) \wedge f(k+1)f(k+2),f(k-1),f(k-2)f(k-1) \wedge f(k) f(k+1)\right]
\end{equation}
\begin{equation}\small 
Y_k = \left[f(k+1)f(k+2) \wedge f(k-2)f(k-1), f(k+2), f(k+1)f(k+2) \wedge f(k-1)f(k), f(k+1)\right].
\end{equation}
By  R. Schwartz et al.\cite{SOT10}, $\{X_k, Y_k\}_{k=1}^N$ is a coordinate system of $\mathcal{C}_{3,N}$. 
\end{definition}

\begin{proposition}{\sc[Formulas for $\{,\}_{\mathcal{C}_{3,N}}$]}
\label{propc3}
We have
\begin{equation}
\{X_k, X_{k+1}\}_{\mathcal{C}_{3,N}} = X_k \cdot X_{k+1} \cdot (1- Y_k) \cdot (1- X_k - X_{k+1}),
\end{equation}
\begin{equation}
\{X_k, X_{k+2}\}_{\mathcal{C}_{3,N}} = X_k \cdot X_{k+1} \cdot X_{k+2} \cdot (Y_k + Y_{k+1} - 1),
\end{equation}
\begin{equation}
\{X_k, Y_{k-3}\}_{\mathcal{C}_{3,N}} = X_{k-1} \cdot X_k \cdot Y_{k-3} \cdot Y_{k-2},
\end{equation}
\begin{equation}
\{X_k, Y_{k-2}\}_{\mathcal{C}_{3,N}} = X_k \cdot Y_{k-2} \cdot (-X_{k-1} - Y_{k-1}+ X_k Y_{k-1} + X_{k-1} Y_{k-1} + X_{k-1} Y_{k-2}),
\end{equation}
\begin{equation}
\{X_k, Y_{k-1}\}_{\mathcal{C}_{3,N}} = X_k \cdot Y_{k-1} \cdot (1- X_k) \cdot (1- Y_{k-1}),
\end{equation}
\begin{equation}
\{X_k, Y_k\}_{\mathcal{C}_{3,N}} = - X_k \cdot Y_k \cdot (1- X_k) \cdot (1- Y_k),
\end{equation}
\begin{equation}
\{X_k, Y_{k+1}\}_{\mathcal{C}_{3,N}} = X_k \cdot Y_{k+1} \cdot (Y_k + X_{k+1} - X_k Y_k - X_{k+1} Y_k - X_{k+1} Y_{k+1}),
\end{equation}
\begin{equation}
\{X_k, Y_{k+2}\}_{\mathcal{C}_{3,N}} = - X_k \cdot X_{k+1} \cdot Y_{k+1} \cdot Y_{k+2},
\end{equation}
\begin{equation}
\{Y_k, Y_{k+1}\}_{\mathcal{C}_{3,N}} =  Y_k \cdot Y_{k+1} \cdot(1-X_{k+1}) \cdot (1 - Y_k - Y_{k+1}),
\end{equation}
\begin{equation}
\{Y_k, Y_{k+2}\}_{\mathcal{C}_{3,N}} = Y_k \cdot Y_{k+1} \cdot Y_{k+2}(X_{k+1} + X_{k+2} - 1),
\end{equation}
for $k =1,...,N$ with the convention $k+N = k$. Except the above brackets and their symmetry ones by $\{a,b\}_{\mathcal{C}_{3,N}} = -\{b,a\}_{\mathcal{C}_{3,N}}$, all the other brackets between two generators are zero.
\end{proposition}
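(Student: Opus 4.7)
The proof follows exactly the strategy of Proposition \ref{propc2}, in three steps: (i) lift $X_k$ and $Y_k$ to cross-fractions in $\mathcal{B}'(\mathcal{P})$ via $\theta$; (ii) compute the swapping brackets between the lifts by Leibniz's rule and the generating formula $\{rx,sy\}=\mathcal{J}(rx,sy)\cdot ry\cdot sx$; (iii) project back through $\psi\circ\theta$ and rewrite the resulting rational expression in determinants as a polynomial in the $X_\ell,Y_\ell$.

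For step (i), observe that under $\theta$ a factor $ab$ specialises to the $3\times 3$ determinant $\Omega(\widetilde f(a)\wedge\widetilde f(b)\wedge\widetilde f(b{+}1))$ since $\widetilde{f_2}(b)=\widetilde f(b)\wedge\widetilde f(b{+}1)$. The points $f(k{-}2)f(k{-}1)\wedge f(k{+}1)f(k{+}2)$ and $f(k{-}2)f(k{-}1)\wedge f(k)f(k{+}1)$ appearing in $X_k$ are collinear with $f(k-2)$ and $f(k-1)$ on the line spanned by the latter two, so the cross-ratio $X_k$ (and symmetrically $Y_k$) is expressible as a ratio of $3\times 3$ determinants and thereby identified with $\psi\circ\theta$ of a concrete cross-fraction in $\mathcal{B}'(\mathcal{P})$. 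This is precisely the $n=3$ analogue of the identification used for $B_k$ in Proposition \ref{propc2}.

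For step (ii), each lift is a product of four generators of the form $r_ir_j$, so expanding by Leibniz's rule produces up to sixteen terms per bracket. Only pairs $(r_ir_j,r_kr_\ell)$ whose four endpoints interlace on the cyclic circle $\mathcal{P}$ contribute a nonzero $\mathcal{J}$, which cuts the count sharply. For step (iii), the surviving monomials are cross-fractions in $\mathcal{B}'(\mathcal{P})$ whose $\psi\circ\theta$-images collapse, after collecting terms, to polynomial combinations such as $1-X_k$, $1-Y_k$, and $X_k+Y_{k-1}-X_kY_{k-1}$ appearing on the right-hand side of the proposition. This is in direct parallel with the $n=2$ case, where the single factor $1-B_k^{-1}$ arose from $\psi\circ\theta$ of one cross-fraction.

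The main obstacle is sheer bookkeeping: ten brackets, each producing many Leibniz terms with linking numbers in $\{0,\pm\tfrac12,\pm 1\}$, and each surviving monomial requires a Pl\"ucker-type determinantal identity that descends to $\mathcal{B}'(\mathcal{P})$ to be recognised as a polynomial in $X_\ell,Y_\ell$. I would carry out the bracket $\{X_k,Y_k\}_{\mathcal{C}_{3,N}}=-X_kY_k(1-X_k)(1-Y_k)$ in full, since it already exhibits all four basic factors, and indicate that the remaining nine brackets follow by the same template. The vanishing of brackets between generators whose indices are sufficiently far apart is automatic from $\mathcal{J}\equiv 0$ on non-interlacing quadruples, which confines the Poisson support of each $X_k$ or $Y_k$ to a window of five consecutive indices on the polygon.
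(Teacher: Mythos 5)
Your overall template (lift to the swapping algebra, expand by Leibniz's rule and linking numbers, push back through $\psi\circ\theta$) is indeed the paper's strategy, but your step (i) contains a genuine error that would stall the computation before it starts: with the standard choice $f_2(j)=f(j)\wedge f(j+1)$, the coordinates $X_k$ and $Y_k$ are \emph{not} themselves images under $\psi\circ\theta$ of cross-fractions. A cross-fraction $\frac{ac}{ad}\cdot\frac{bd}{bc}$ maps to a ratio in which every determinant pairs one polygon vertex, $f(a)$ or $f(b)$, with one line through \emph{consecutive} vertices, $f(c)\wedge f(c+1)$ or $f(d)\wedge f(d+1)$. Writing $X_k=\frac{(a-c)(b-d)}{(a-d)(b-c)}$ with $a=f(k-2)$, $c=f(k-1)$ vertices and $b,d$ the two intersection points, the factors $a-c$ (vertex minus vertex) and $b-d$ (intersection minus intersection) do not have this shape and the normalizing constants do not cancel; equivalently, projecting from $f(k+1)$ expresses $X_k$ through $\det\bigl(f(k+1),f(k+2),f(k)\bigr)$, whose last two arguments are not consecutive. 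What \emph{is} the image of a cross-fraction is the M\"obius transform $\frac{1}{1-X_k}$: by the identity $1-[a,b,c,d]=\frac{(a-b)(d-c)}{(a-d)(b-c)}$ every surviving factor pairs a vertex with a line through consecutive vertices, giving exactly $\psi\bigl(\theta\bigl(\frac{(k-2)(k)}{(k-2)(k+1)}\cdot\frac{(k-1)(k+1)}{(k-1)(k)}\bigr)\bigr)=\frac{1}{1-X_k}$, and similarly for $\frac{1}{1-Y_k}$.

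This is precisely why the paper computes the swapping brackets among the $2N$ elements lifting $\frac{1}{1-X_k}$ and $\frac{1}{1-Y_k}$, and only afterwards recovers $\{X_k,Y_l\}_{\mathcal{C}_{3,N}}$ and the rest via the Leibniz rule applied to the change of variables $X_k=1-Z_k^{-1}$, $Z_k=\frac{1}{1-X_k}$ (note $Z_k^{-1}$ is again a cross-fraction, so $X_k$ does lie in $\psi(\theta(\mathcal{B}'(\mathcal{P})))$, just not as a single cross-fraction). You must insert this step: without the correct concrete lift there is nothing to expand in your step (ii), and an incorrect lift would produce incorrect right-hand sides. Once this is fixed, the remainder of your outline (Leibniz expansion with linking numbers in $\{0,\pm\frac{1}{2},\pm1\}$, recognition of the surviving monomials as images of cross-fractions, and the automatic vanishing for far-apart indices) matches the paper's argument; since neither you nor the paper writes out the ten expansions, actually carrying out $\{X_k,Y_k\}_{\mathcal{C}_{3,N}}$ in full, as you propose, would be a welcome addition.
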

\begin{proof}
By observing that 
\begin{equation}
\psi\left(\theta\left(\frac{(k-2)(k)}{(k-2)(k+1)}\cdot \frac{(k-1)(k+1)}{(k-1)(k)}\right)\right)=\frac{1}{1-X_k},
\end{equation}
\begin{equation}
\psi\left(\theta\left(\frac{(k+2)(k-1)}{(k+2)(k-2)}\cdot \frac{(k+1)(k-2)}{(k+1)(k-1)}\right)\right)=\frac{1}{1-Y_k}.
\end{equation}
We calculate explicitly the swapping brackets between the $2N$ terms above. By more complicated calculations and the same argument as in Proposition \ref{propc2}, we have the above formulas.
\qed
\end{proof}
\begin{remark}
We notice that the above formulas coincide with Equations (2.2.5) of \cite{P94} when $\tau_{2k-1}$ is replaced by $X_{k}$ and $\tau_{2k}$ is replaced by $Y_k$, where $\{\tau_{k}\}$ are the coordinates for the lattice $W_3$ algebra arising from the study of the quantum invariant ring.
\end{remark}

\section{Large N asymptotic relations}
\label{sectionswapvirn}
There are two Poisson algebras on $\mathcal{C}_{2,N}$, one is the Faddeev-Takhtajan-Volkov algebra, another is induced from the discrete version of the first Adler-Gelfand-Dickey Poissson bracket by Miura transformation.
In this section, we relate asymptotically the dual of these Poisson algebras to the Virasoro algebra.

\subsection{The discrete Hill's operator and the cross-ratios}
\begin{definition}\label{defndh2}
{\sc[Discrete Hill's equation]}
Let $N\geq 1$ be an integer.
Given a periodic sequence $\{H_k\}_{k=-\infty}^\infty$ in $\mathbb{R}$ where $H_{N+k} = H_k$ for any $k \in \mathbb{Z}$. The {\em discrete Hill's equation} is the difference equation in $\{C_k\}_{k=-\infty}^\infty$:
\begin{equation}
\frac{\frac{C_{k+1}-C_{k}}{N}-\frac{C_{k}-C_{k-1}}{N}}{N}  = H_k\cdot C_k,
\end{equation}
or equivalently
\begin{equation}
\label{equHill}
C_{k+1} = \left(\frac{H_k}{N^2} +2 \right) \cdot C_k - C_{k-1},
\end{equation}
for any $k$ belongs to $\mathbb{Z}$.

We define $\{H_k\}_{k = -\infty}^\infty$ to be a {\em discrete Hill's operator}, and $\{C_k\}_{k=-\infty}^\infty$ is the {\em solution} to the discrete Hill's equation.
\end{definition}
Given a discrete Hill's operator, by Equation \ref{equHill}, the series $\{C_k\}_{k=-\infty}^\infty$ is fixed when two initial values $C_0$, $C_1$ are given. Since the difference equation is homogeneous, so up to scalar and $\operatorname{PSL}(2,\mathbb{R})$ projective transformation, there are exactly two linear independent solutions of the discrete Hill's operator, which are corresponding to one point of $\mathcal{C}_{2,N}$.

Conversely, when $N$ is odd, given a point of $\mathcal{C}_{2,N}$, there is one unique Hill's operator corresponding it by the similar argument of Proposition 4.1 of \cite{SOT10}.

\begin{proposition}{\sc[\cite{SOT10}]}
\label{corhb}
Let $N>3$ be odd, let $\{f(i)\}_{i \in \mathbb{Z}} \in \mathcal{C}_{2,N}$, $f(i)= [f_i:1] \in \mathbb{RP}^1$. There exists one unique discrete Hill's equation \ref{equHill} such that $\{X_i\}_{i=-\infty}^\infty$ and $\{Y_i\}_{i=-\infty}^\infty$ are two linear independent solutions of \ref{equHill} and $[X_i:Y_i]= [f_i:1]$.
\end{proposition}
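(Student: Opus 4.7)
The plan is to lift the polygon to a map $\tilde f \colon \mathbb Z \to \mathbb R^2\setminus\{0\}$, with components $(\tilde f(k))_1 = X_k$ and $(\tilde f(k))_2 = Y_k$, so that $(X_k, Y_k)$ become the two linearly independent solutions, and then read off $H_k$ from the resulting three-term recurrence. Writing $\tilde f(k) = \lambda_k(f_k, 1)$, I would impose the Wronskian normalization
$$W_k := \det\bigl(\tilde f(k), \tilde f(k+1)\bigr) = \lambda_k\lambda_{k+1}(f_k - f_{k+1}) = 1,$$
which recursively determines $\{\lambda_k\}_{k \in \mathbb Z}$ from the single initial datum $\lambda_0$. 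Since any three vectors in $\mathbb R^2$ are linearly dependent, the condition $W_k \equiv 1$ forces $\tilde f(k+1) = a_k\,\tilde f(k) - \tilde f(k-1)$ for a uniquely determined $a_k$; setting $H_k := N^2(a_k - 2)$ then matches Equation \ref{equHill} exactly, and the two components of $\tilde f$ give solutions with $[X_k : Y_k] = [f_k : 1]$ and nonzero Wronskian.

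All that remains is to choose $\lambda_0$ so that $\{H_k\}$ is $N$-periodic. Fixing one of the two $\mathrm{SL}(2,\mathbb R)$ lifts $\hat M$ of the monodromy $M_f$, projectivity of the twisted condition yields $\tilde f(k+N) = \nu_k\, \hat M\,\tilde f(k)$ for scalars $\nu_k$. Comparing Wronskians at $k$ and $k+N$, using $\det\hat M = 1$ and $W_k \equiv 1$, gives $\nu_k\nu_{k+1} = 1$, so $\nu_k$ alternates between $\nu_0$ and $\nu_0^{-1}$. A direct calculation then shows that $a_{k+N} = a_k$ is equivalent to $\nu_k$ being constant in $k$, which together with $\nu_k\nu_{k+1}=1$ forces $\nu_0 \in \{\pm 1\}$; equivalently, we are asking $\tilde f$ to be intertwined by a genuine $\mathrm{SL}(2,\mathbb R)$ element $\nu_0\hat M$.

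The hardest step, and the one that uses $N$ odd essentially, is the rescaling. Scaling $\lambda_0 \mapsto t\lambda_0$ propagates through the Wronskian recurrence as $\lambda_k \mapsto t^{(-1)^k}\lambda_k$, which shifts
$$\nu_0 \;\longmapsto\; t^{(-1)^{N}-1}\,\nu_0,$$
equal to $t^{-2}\nu_0$ when $N$ is odd and to $\nu_0$ itself when $N$ is even. Thus exactly for odd $N$ there is a unique $t>0$ with $t^2 = |\nu_0|$ normalizing $\nu_0$ to $\operatorname{sgn}(\nu_0) \in \{\pm 1\}$, which produces the required Hill operator; the remaining sign $t\mapsto -t$ flips $\tilde f\mapsto -\tilde f$ globally and leaves $a_k$, hence $\{H_k\}$, untouched, giving uniqueness. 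The principal obstacle is the parity bookkeeping—keeping the alternation $\nu_k\nu_{k+1}=1$ consistent with the scaling exponents $(-1)^k$—which is precisely why the parallel argument of Proposition 4.1 in \cite{SOT10} hinges on the analogous coprimality hypothesis.
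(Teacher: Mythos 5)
Your construction is essentially the argument the paper has in mind: the paper offers no proof of its own beyond pointing to Proposition 4.1 of \cite{SOT10}, and your lift--and--normalize--the--Wronskian scheme, with the parity of $N$ entering through the alternation $\lambda_k \mapsto t^{(-1)^k}\lambda_k$, is the correct discrete analogue of that argument. The existence half is complete, modulo the small remark that you need $a_k \neq 0$ (which follows from $f_{k-1}\neq f_{k+1}$, i.e.\ general position) to pass from $a_{k+N}=a_k$ to $\nu_{k+1}=\nu_k$.

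There is, however, a genuine gap in the uniqueness half. Your argument only parametrizes the lifts with $W_k \equiv 1$, via the single scalar $t$ acting by $\lambda_k \mapsto t^{(-1)^k}\lambda_k$. But the hypotheses of the proposition do not normalize the Wronskian of the solution pair: any pair of linearly independent solutions with $[X_i:Y_i]=[f_i:1]$ is admitted, and its Wronskian $h = X_{k+1}Y_k - X_kY_{k+1}$ is a nonzero constant of either sign. Pairs with $h>0$ reduce to your normalized case after rescaling by $1/\sqrt{h}$, but pairs with $h<0$ do not: they correspond to the alternating lift $g(k) = (-1)^k\tilde f(k)$, which is not of the form $t^{(-1)^k}\tilde f(k)$ for any real $t$ (that would require $t$ and $t^{-1}$ to have opposite signs). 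This $g$ satisfies $g(k+1) = -a_k\, g(k) - g(k-1)$, still projects to $f$, still has linearly independent components, and its coefficient sequence $-a_k$ is still $N$-periodic (for $N$ odd, $g$ is intertwined by the other $\operatorname{SL}(2,\mathbb{R})$ lift $-\nu_0\hat M$ of the monodromy). Hence it produces a second discrete Hill operator, $H_k' = -H_k - 4N^2$, satisfying every condition in the statement, and $H_k'\neq H_k$ since $a_k\neq 0$. So uniqueness as literally stated holds only up to the involution $b_k\mapsto -b_k$; to close the gap you must either impose a normalization (positive Wronskian, equivalently the branch with $b_k = 2 + H_k/N^2$ near $2$, which is the one the paper actually uses in Proposition \ref{calH} and in Section \ref{sectionswapvirn}) or record the uniqueness as being up to this sign.
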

\begin{notation}
Let $b_k= \frac{H_k}{N^2} +2 $.
\end{notation}
\begin{corollary}
Let $N>3$ be odd, $\{b_k\}_{k=1}^N$ and $\{H_k\}_{k=1}^N$ are two coordinate systems of $\mathcal{C}_{2,N}$.
\end{corollary}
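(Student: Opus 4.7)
The plan is to realize the map $\Phi : \mathcal{C}_{2,N} \to \mathbb{R}^N$ sending $f$ to $(b_1,\ldots,b_N)$ as a diffeomorphism onto an open subset of $\mathbb{R}^N$; since $b_k = H_k/N^2 + 2$ is an invertible affine change of variables, this simultaneously settles the claim for both coordinate sets. The forward map is already furnished by Proposition \ref{corhb}, which under the hypothesis $N>3$ odd assigns to each $f \in \mathcal{C}_{2,N}$ a unique discrete Hill's operator. For injectivity of $\Phi$, I would observe that if two configurations produce the same image then they satisfy the identical recurrence \ref{equHill}, whose $\mathbb{R}$-solution space is two-dimensional; hence any pair of $\mathbb{R}^2$-valued lifts spans a common plane and differs by an element of $\operatorname{GL}(2,\mathbb{R})$, and their projectivizations are $\operatorname{PGL}(2,\mathbb{R})$-equivalent, defining the same class in $\mathcal{C}_{2,N}$.

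Next I would construct the inverse map explicitly. Given a periodic sequence $(b_k)$, the transfer matrices of the recurrence $C_{k+1} = b_k C_k - C_{k-1}$ have determinant one. Picking any basis $\{X_k\}, \{Y_k\}$ of the two-dimensional solution space and setting $f(k) := [X_k:Y_k] \in \mathbb{RP}^1$ yields a twisted polygon whose $N$-step monodromy lies in $\operatorname{SL}(2,\mathbb{R})$, hence descends to $\operatorname{PSL}(2,\mathbb{R})$ as required by the definition of $\mathcal{C}_{2,N}$. Different choices of basis produce $\operatorname{PGL}(2,\mathbb{R})$-equivalent configurations, so $[f] \in \mathcal{C}_{2,N}$ is well-defined, and applying $\Phi$ to this reconstruction returns the original $(b_k)$ tautologically. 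A quick dimension count, $\dim \mathcal{C}_{2,N} = N + 3 - 3 = N$ (from $N$ points in $\mathbb{RP}^1$ and a three-parameter monodromy modulo the three-parameter $\operatorname{PSL}(2,\mathbb{R})$-action), matches the target.

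The main subtlety I expect is ensuring general position. The definition of $\mathcal{C}_{2,N}$ demands $f(i) \neq f(j)$ for all $i \neq j$, equivalent to the non-vanishing of every Wronskian $X_i Y_j - X_j Y_i$. These are open polynomial conditions on $(b_k)$, cutting out precisely the image of $\Phi$; Proposition \ref{corhb} already ensures this open locus is non-empty because $\Phi(\mathcal{C}_{2,N})$ lies in it. Therefore $\Phi$ is a diffeomorphism onto an open subset of $\mathbb{R}^N$, and both $\{b_k\}_{k=1}^N$ and $\{H_k\}_{k=1}^N$ serve as coordinate systems on $\mathcal{C}_{2,N}$.
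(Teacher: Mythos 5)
Your proof is correct and takes essentially the same route as the paper, which states this corollary without a separate proof as an immediate consequence of the surrounding discussion: the recurrence \ref{equHill} has a two-dimensional solution space whose projectivization gives a well-defined point of $\mathcal{C}_{2,N}$, while Proposition \ref{corhb} supplies the inverse, and $b_k = H_k/N^2 + 2$ is an invertible affine change of variables. Your writeup merely fills in the details of this two-way correspondence (injectivity via the common solution plane, unimodular transfer matrices, the open general-position locus, and the dimension count), all of which are sound.
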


The following proposition explains the relation between the discrete Hill's operator and cross ratio coordinate $\{B_k\}_{k=1}^N$.
\begin{proposition}
\label{calH}
Let $\{X_i\}_{i=-\infty}^\infty$ and $\{Y_i\}_{i=-\infty}^\infty$ be two linear independent solutions of \ref{equHill} and $[X_i:Y_i]= [f_i:1]$. For any $ k \in \mathbb{Z}$, we have
\begin{equation}
 B_k = [f_{k-1}, f_{k+2}, f_{k+1}, f_{k}] = b_k \cdot b_{k+1}.
\end{equation}
\end{proposition}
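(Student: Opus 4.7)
The plan is to express the cross-ratio $B_k$ in terms of the two independent solutions $\{X_i\}$ and $\{Y_i\}$, and then reduce everything to discrete Wronskians which will simplify via the Hill recursion.

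First, since $f_i = X_i / Y_i$, for any two indices $i,j$ we have $f_i - f_j = (X_iY_j - X_jY_i)/(Y_iY_j)$. Substituting into the definition of the cross-ratio, all the single-index $Y$-factors cancel and I am left with
\begin{equation*}
B_k = \frac{W_{k-1,k+1}\, W_{k+2,k}}{W_{k-1,k}\, W_{k+2,k+1}},
\end{equation*}
where I set $W_{i,j} := X_iY_j - X_jY_i$, an antisymmetric discrete Wronskian.

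Next I would show that the nearest-neighbour Wronskian $W_{k+1,k}$ is independent of $k$. Using the Hill recursion $X_{k+1} = b_k X_k - X_{k-1}$ and the analogous one for $Y_k$, a direct substitution gives $W_{k+1,k} = -W_{k-1,k} = W_{k,k-1}$. Call this constant $W$; then the denominator of $B_k$ is $W_{k-1,k}\,W_{k+2,k+1} = (-W)(W) = -W^2$.

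For the numerator I apply the recursion once more to jump indices by two. The computation
\begin{equation*}
W_{k-1,k+1} = X_{k-1}(b_kY_k - Y_{k-1}) - (b_kX_k - X_{k-1})Y_{k-1} = b_k\, W_{k-1,k} = -b_k W,
\end{equation*}
and an identical one $W_{k+2,k} = b_{k+1}\,W_{k+1,k} = b_{k+1}W$, yields numerator $-b_kb_{k+1}W^2$. Dividing gives $B_k = b_kb_{k+1}$, as claimed. There is no real obstacle here: once the cross-ratio is rewritten in Wronskian form, the constancy of the discrete Wronskian (which is the discrete analog of the Hill Wronskian being constant) and the two-step recursion do all the work; the only mild care needed is bookkeeping the antisymmetry signs of $W_{i,j}$.
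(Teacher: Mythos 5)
Your proof is correct and is essentially the same as the paper's: both rest on the constancy of the nearest-neighbour discrete Wronskian $W_{k+1,k}=X_{k+1}Y_k-X_kY_{k+1}$ (the paper's $h$, nonzero by linear independence) and on the two-step identity $W_{k\pm 2,k}= b_{k\pm1}W_{k\pm1,k}$ coming from the recursion; the paper merely keeps the $Y_iY_j$ denominators until the final cross-ratio, where they cancel, while you cancel them at the outset. The only cosmetic addition worth making is an explicit remark that $W\neq 0$ (so that dividing by $-W^2$ is legitimate), which the paper states when introducing $h$.
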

\begin{proof}
Since $\{X_i\}_{i=-\infty}^\infty$ and $\{Y_i\}_{i=-\infty}^\infty$ are linear independent,
we have $$h:= X_1 Y_0 - X_0 Y_1\neq 0.$$
  For any $k \geq 0$, we have
  \begin{equation}
  \begin{aligned}
  &X_{k+1} Y_k -  X_k Y_{k+1} = (b_k X_k -X_{k-1})\cdot Y_k - X_k \cdot(b_k Y_k -Y_{k-1}) = X_{k} Y_{k-1} -  X_{k-1} Y_k
  \\& = ...= X_1 Y_0 - X_0 Y_1 = h,
  \end{aligned}
  \end{equation}
  \begin{equation}
  f_{k+1} - f_k = \frac{X_{k+1}}{Y_{k+1}} -  \frac{X_{k}}{Y_{k}} = \frac{h}{Y_k Y_{k+1}},
  \end{equation}
  \begin{equation}
  \begin{aligned}
  &f_{k+2} - f_k = \frac{X_{k+2}}{Y_{k+2}} -  \frac{X_{k}}{Y_{k}} = \frac{X_{k+2} Y_k - X_k Y_{k+2}}{Y_k Y_{k+2}}
  \\& = \frac{(b_{k+1} X_{k+1}-X_k)\cdot Y_k - X_k \cdot(b_{k+1} Y_{k+1}-Y_k)}{Y_k Y_{k+2}} = \frac{h b_{k+1}}{Y_k Y_{k+1}}.
  \end{aligned}
  \end{equation}
  Thus we have
  \begin{equation}
  \begin{aligned}
  \frac{f_{k-1}-f_{k+1}}{f_{k-1}-f_{k}} \cdot \frac{f_{k+2}-f_{k}}{f_{k+2}-f_{k+1}} = \frac{-\frac{h b_k}{Y_{k-1} Y_{k+1}}}{-\frac{h}{Y_{k-1} Y_{k}}} \cdot \frac{\frac{h b_{k+1}}{Y_{k+2} Y_{k}}}{\frac{h}{Y_{k+2} Y_{k+1}}} = b_k b_{k+1}
  \end{aligned}
  \end{equation}
  for any $k\geq 1$.

  By the similar argument, for $k <1$, we have $B_k=b_k\cdot b_{k+1}$.

  We conclude that $B_k=b_k\cdot b_{k+1}$ for any $k \in \mathbb{Z}$.
  \qed
\end{proof}
\begin{remark}
The cross ratio $B_k$ can be understood as Schwarzian derivative \cite{OT05}.
\end{remark}

\subsection{Virasoro algebra and $\{\cdot,\cdot\}_{\mathcal{C}_{2,N}}$}
By comparing with Proposition 2.3 of \cite{KW09} page 67, we define a discrte version of Virasoro algebra.
\begin{definition}
{\sc[$(t_1,t_2,N)$-Virasoro bracket]} Let $t_1, t_2 \in \mathbb{R}$ and $N \in \mathbb{N}$, the \; {\em $(t_1,t_2,N)$-Virasoro bracket} on $\{I_k\}_{k=-N}^N$ is defined to be:

For $p ,q = -\left[\frac{N-1}{2}\right],...,\left[\frac{N}{2}\right]$,
\begin{enumerate}
  \item when $p \neq -q$, we have
$$\{I_p, I_q\}_{N,t_1,t_2}  = \left(p-q\right)\cdot I_{p+q}  $$
with the convention $I_{k+N} = I_k$;
  \item when $p = -q$, we have
$$\{I_p, I_{-p}\}_{N,t_1,t_2}  = 2p \cdot I_0+ t_1 \cdot p^3 + t_2 \cdot p .$$
\end{enumerate}
\end{definition}

\begin{remark}
Notice that $(t_1,t_2,N)$-Virasoro bracket is asymptotic to the Poisson bracket associated to the 2-cocycle with $c_1 = t_1$, $c_2 = t_2$ as in Proposition 2.3 of \cite{KW09} page 67 when $N$ converges to infinite, but it is not a Poisson bracket.

Very specifical values of $t_1$ and $t_2$ correspond to Virasoro algebra. When $t_1$ is fixed, $t_2$ varies, they correspond to same element in the cohomology group $H^2(\operatorname{Vect}(S^1),\mathbb{R})$. Different $t_1$ corresponds to different element in the one dimensional space $H^2(\operatorname{Vect}(S^1),\mathbb{R})$.
\end{remark}

\begin{definition}{\sc[Discrete Fourier transformation]}
Let $\{B_k\}_{k=1}^N$ be the cross ratio coordinates of $\mathcal{C}_{2,N}$. Let $\mathbb{B}=\{B_1,...,B_N\}$.
The {\em discrete Fourier transformation} $\mathcal{F}$ of $\mathbb{B}$ is defined to be
\begin{equation}
\mathcal{F}_p \mathbb{B}  = \sum_{k=1}^N B_k e^{-\frac{2 p k \pi i}{N}}.
\end{equation}
\end{definition}
Our main proposition of this subsection is
\begin{proposition}{\sc[Large N asymptotic]}
\label{propswapvir}
Let $N>3$.
For $k =-\left[\frac{N-1}{2}\right],...,\left[\frac{N}{2}\right]$, let $$V_k = \frac{\mathcal{F}_{k} \mathbb{B} \cdot N}{8 \pi i}.$$
We have
\begin{equation}
\{V_p, V_q\}_{\mathcal{C}_{2,N}} = \{V_p, V_q\}_{N,\frac{8\pi^2}{N},8N}  + o\left(\frac{1}{N^2}\right).
\end{equation}
\end{proposition}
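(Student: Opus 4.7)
The plan is to unwind $\{V_p,V_q\}_{\mathcal{C}_{2,N}}$ via bilinearity of the swapping-induced bracket, reduce to the nearest- and next-nearest-neighbour brackets using Proposition \ref{propc2}, and then parametrize a neighbourhood of the ``smooth limit'' $B_k\equiv 4$ using Proposition \ref{calH}. Writing $\omega_r=e^{-2\pi ir/N}$, bilinearity and antisymmetry give
$$\{V_p,V_q\}_{\mathcal{C}_{2,N}}=-\frac{N^2}{64\pi^2}\Bigl\{(\omega_q-\omega_p)\,\widehat S^{(1)}_{p+q}+(\omega_q^2-\omega_p^2)\,\widehat S^{(2)}_{p+q}\Bigr\},$$
with $\widehat S^{(j)}_r:=\sum_{k=1}^N\{B_k,B_{k+j}\}_{\mathcal{C}_{2,N}}\,e^{-2\pi irk/N}$. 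The core subsequent task is the asymptotic expansion of each $\widehat S^{(j)}_r$ as $N\to\infty$.

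For the expansion, set $\epsilon_k:=B_k-4$; by Proposition \ref{calH} with $b_k=2+H_k/N^2$ one has $B_k=b_kb_{k+1}$, hence $\epsilon_k=O(1/N^2)$. Using $\{B_k,B_{k+1}\}_{\mathcal{C}_{2,N}}=B_kB_{k+1}-B_k-B_{k+1}$ and $\{B_k,B_{k+2}\}_{\mathcal{C}_{2,N}}=-B_kB_{k+2}/B_{k+1}$, expand to second order in $\epsilon$:
$$\{B_k,B_{k+1}\}_{\mathcal{C}_{2,N}}=8+3(\epsilon_k+\epsilon_{k+1})+\epsilon_k\epsilon_{k+1}+O(1/N^6),$$
$$\{B_k,B_{k+2}\}_{\mathcal{C}_{2,N}}=-4-(\epsilon_k-\epsilon_{k+1}+\epsilon_{k+2})-\tfrac{1}{4}\bigl(\epsilon_{k+1}^2-\epsilon_{k+1}\epsilon_k-\epsilon_{k+1}\epsilon_{k+2}+\epsilon_k\epsilon_{k+2}\bigr)+O(1/N^6).$$
Then the shift identity $\sum_k\epsilon_{k+m}e^{-2\pi irk/N}=e^{2\pi irm/N}\mathcal F_r(\epsilon)$ together with $\mathcal F_r(\epsilon)=\tfrac{8\pi iV_r}{N}-4N\delta_{r,0}$ converts each $\widehat S^{(j)}_r$ into a trigonometric polynomial in $1/N$ whose coefficients are linear in $V_r$, plus quadratic convolution terms arising from the $\epsilon_k\epsilon_l$ products.

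Finally collect contributions by cases. For $p+q\neq 0$ the constant pieces of $\widehat S^{(j)}$ vanish by orthogonality of characters, the linear-in-$\epsilon$ pieces combined with $(\omega_q-\omega_p)$ and $(\omega_q^2-\omega_p^2)$ deliver $(p-q)V_{p+q}$ at the leading order, and the quadratic pieces go into the $o(1/N^2)$ remainder after a power count. For $p+q=0$ the constant pieces yield $16iN\sin(2\pi p/N)[1-\cos(2\pi p/N)]$, the linear pieces yield $2pV_0$, and the quadratic convolutions combine with the subleading terms of the Taylor expansions of $\sin$ and $\cos$ to produce the central terms $\tfrac{8\pi^2}{N}p^3+8Np$. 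Comparison with the definition of $\{\cdot,\cdot\}_{N,8\pi^2/N,8N}$ then yields the claim.

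The main obstacle is this last step: three distinct sources (constant, linear, and quadratic in $\epsilon$) contribute simultaneously to both the Lie-bracket term $(p-q)V_{p+q}$ and to the central terms $t_1p^3+t_2p$. The Fourier kernels $\omega_q-\omega_p$ and $\omega_q^2-\omega_p^2$ must therefore be expanded to cubic order in $1/N$ before the coefficients $t_1=8\pi^2/N$ and $t_2=8N$ emerge through cancellation, and confirming that the remainder is genuinely $o(1/N^2)$ after restoration of the overall factor $-N^2/(64\pi^2)$ and summation over $k$ requires pushing the expansion one order further than the naïve leading calculation suggests.
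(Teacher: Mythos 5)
Your setup coincides with the paper's own proof: the reduction of $\{V_p,V_q\}_{\mathcal{C}_{2,N}}$ to the two weighted sums $(\omega_q-\omega_p)\widehat S^{(1)}_{p+q}$ and $(\omega_q^2-\omega_p^2)\widehat S^{(2)}_{p+q}$ is exactly the combination the paper performs in its first display, the expansion of the neighbour brackets from Proposition \ref{propc2} around $B_k=4$ via $b_k=2+H_k/N^2$ is the paper's use of Proposition \ref{calH}, and the case split on $p+q$ using orthogonality of characters is the same. Your explicit expansions of $\{B_k,B_{k+1}\}$ and $\{B_k,B_{k+2}\}$ in $\epsilon_k=B_k-4$ are correct, and the $p+q\neq 0$ case goes through as you describe.

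The gap is in the $p=-q$ case, where the central terms are asserted rather than derived and are attributed to the wrong source. The quadratic convolutions $\sum_k\epsilon_k\epsilon_l e^{-2\pi irk/N}$ depend on the configuration (on the $H_k$), so they cannot produce the constant central terms $t_1p^3+t_2p$ at all; moreover your own power count for them gives $N^2\cdot O(1/N)\cdot O(1/N^3)=O(1/N^2)$, which is not $o(1/N^2)$, so they can neither supply the central extension nor be pushed into the remainder by a bare power count. In the paper's computation the two central terms have entirely different origins from the one you name: the $p^3$ term comes from the constants $8$ and $-4$ in $\widehat S^{(1)}_0$, $\widehat S^{(2)}_0$ paired with the cubic Taylor coefficients of the kernels --- your expression $16iN\sin(2\pi p/N)\bigl(1-\cos(2\pi p/N)\bigr)\sim 64i\pi^3p^3/N^2$ is exactly this and is purely cubic in $p$, with no linear part --- while the term proportional to $N\cdot p$ comes from the $-4N\delta_{r,0}$ summand of $\mathcal{F}_0(\epsilon)$, which you write down correctly in the identity $\mathcal{F}_r(\epsilon)=\tfrac{8\pi iV_r}{N}-4N\delta_{r,0}$ and then drop: your statement that ``the linear pieces yield $2pV_0$'' keeps only the $\tfrac{8\pi iV_0}{N}$ part of $\mathcal{F}_0(\epsilon)$ and loses the $-4N$ part, which is precisely what generates the large linear central term (the paper's $-64p\pi i$). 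As written, your final step sends the reader looking for $t_1$ and $t_2$ in terms that cannot supply them and omits the term that does, so the identification of the bracket as $\{\cdot,\cdot\}_{N,8\pi^2/N,8N}$ is not established.
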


\begin{proof}

For $p ,q = -\left[\frac{N-1}{2}\right],...,\left[\frac{N}{2}\right]$, we have
\begin{equation}
\begin{aligned}
&\{\mathcal{F}_p \mathbb{B}, \mathcal{F}_q \mathbb{B}\}_{\mathcal{C}_{2,N}} 
\\&= \sum_{k=1}^N  \left(e^{\frac{-2 p k \pi i}{N}} \cdot e^{\frac{-2 q (k+1) \pi i}{N}} - e^{\frac{-2 p (k+1) \pi i }{N}} \cdot e^{\frac{-2 q k \pi i}{N}}\right) \cdot   \left(B_{k} B_{k+1} - B_{k}- B_{k+1}\right)  \\& - \left(e^{\frac{-2 p k \pi i}{N}} \cdot e^{\frac{-2 q (k+2) \pi i}{N}} - e^{\frac{-2 p (k+2) \pi i}{N}} \cdot e^{\frac{-2 q k \pi i}{N}}\right)\cdot \frac{B_{k} B_{k+2}}{B_{k+1}}
\end{aligned}
\end{equation}
By $$B_k = b_k b_{k+1}= 4 + \frac{2 \left(H_k+ H_{k+1}\right) }{N^2}+ \frac{H_k H_{k+1}}{N^4}, $$
 we have
 $$B_k =  4 + \frac{4 H_k }{N^2}+ o\left(\frac{1}{N^2}\right).$$
 We have the above formula equals to
\begin{equation}\small
\begin{aligned}
&  \sum_{k=1}^N   e^{\frac{-2 (p+q) k \pi i}{N}} \cdot \left[\left(\left(1 + \frac{-2 q  \pi i}{N} + \frac{- 2\pi^2 q^2}{N^2} + \frac{ 4\pi^3 q^3 i}{3 N^3}+  o\left(\frac{1}{N^3}\right)\right) -\left(1 + \frac{-2 p  \pi i}{N} + \frac{- 2 \pi^2 \cdot p^2}{N^2} +\right.\right.\right.  \\& \left.\left.\frac{ 4\pi^3 p^3 i}{3 N^3} + o\left(\frac{1}{N^3}\right)\right) \right)\cdot \left(8 + \frac{24 H_k}{N^2}+ o\left(\frac{1}{N^2}\right)\right) - \left(\left(1 + \frac{-4 q  \pi i}{N} + \frac{- 8 \pi^2 q^2}{N^2} +\frac{ 32\pi^3 q^3 i}{3 N^3} + o\left(\frac{1}{N^3}\right)\right)\right.
\\& \left.\left.-\left(1 + \frac{-4 p  \pi i}{N} + \frac{- 8\pi^2 p^2}{N^2} + \frac{ 32\pi^3 p^3 i}{3 N^3} + o\left(\frac{1}{N^3}\right)\right)  \right) \cdot \left(4 + \frac{4 H_k }{N^2} + o\left(\frac{1}{N^2}\right)\right)\right]
\\& = \sum_{k=1}^N   e^{\frac{-2 (p+q) k \pi i}{N}} \cdot \left[\frac{32 H_k (p-q)\pi i}{N^3} - \frac{16 \pi^2 (p^2-q^2)}{N^2} + \frac{ 32\pi^3 (p^3-q^3) i}{ N^3} + o\left(\frac{1}{N^3}\right) \right].
\end{aligned}
\end{equation}
When $p \neq -q$, since $\sum_{k=1}^N   e^{\frac{-2 (p+q) k \pi i}{N}} = 0$, by $B_k =  4 + \frac{4 H_k }{N^2}+ o(\frac{1}{N^2})$, the above formula equals to
\begin{equation}
\sum_{k=1}^N   e^{\frac{-2 (p+q) k \pi i}{N}} \cdot \frac{8 B_k (p-q) \pi i }{N} = \frac{8 (p-q) \pi i}{N}  \cdot \mathcal{F}_{p+q} \mathbb{B} + o\left(\frac{1}{N^3}\right);
\end{equation}
When $p = -q$, the above formula equals to
\begin{equation}
 \begin{aligned}
 &\frac{64 p \pi i }{N^3}\sum_{k=1}^N H_k + \frac{64 p^3 \pi^3 i }{N^2} + o\left(\frac{1}{N^3}\right)
 \\&= \frac{16 p \pi i }{N}\sum_{k=1}^N (B_k-4) + \frac{64 p^3 \pi^3 i }{N^2} + o\left(\frac{1}{N^3}\right)
 \\&= \frac{16 p \pi i}{N} \mathcal{F}_{0} \mathbb{B}  -64 p \pi i  + \frac{64 p^3 \pi^3 i }{N^2} + o\left(\frac{1}{N^3}\right).
 \end{aligned}
\end{equation}
Replacing $\mathcal{F}_{k}$ by $$V_k = \frac{\mathcal{F}_{k} \mathbb{B} \cdot N}{8 \pi i},$$
we obtain that:

for $p \neq -q$,
$$\{V_p, V_q\}_{\mathcal{C}_{2,N}}  = (p-q)\cdot V_{p+q} + o\left(\frac{1}{N^2}\right);$$
for $p = -q$
$$\{V_p, V_{-p}\}_{\mathcal{C}_{2,N}}  =  2p\cdot V_0 + \left( \frac{8 \pi^2  }{N}\right) \cdot p^3 -8N \cdot p + o\left(\frac{1}{N^2}\right).$$
We conclude that
\begin{equation}
\{V_p, V_q\}_{\mathcal{C}_{2,N}} = \{V_p, V_q\}_{N,\frac{8\pi^2}{N},8N} + o\left(\frac{1}{N^2}\right).
\end{equation}
\qed
\end{proof}

\subsection{Poisson structure via Miura transformation and its large N asymptotic}
The discrete version of the Miura transformation $\mu$ \cite{V88} \cite{FRS98}:
\begin{equation}
\mu(v_k) = B_k= \frac{1}{s_k}=(1+v_k)(1+v_{k+1}^{-1})
\end{equation}
is a Poisson map with respect to the Poisson bracket:
\begin{equation}
\{v_i, v_j\}_{V2} = (\delta_{i+1,j}-\delta_{i-1,j})\cdot v_i\cdot v_j
\end{equation}
corresponding to the first Adler-Gelfand-Dickey Poissson bracket.
\begin{definition}{\sc[Another Poisson bracket for $\mathcal{C}_{2,N}$]}
Let us consider the Miura transformation as a map $\mu$ from $\mathbb{R}(v_1,...,v_N)$ to $\mathbb{R}(B_1,...,B_N)$. The map $\mu$ induce another Poisson bracket for $\mathcal{C}_{2,N}$ by:
$$\{B_i , B_j \}_{S2} = \{\mu(v_i), \mu(v_j)\}_{S2}:= \mu \{v_i, v_j\}_{V2}.$$
Thus we have
\begin{equation}
\{B_i, B_j\}_{S2} = (\delta_{i+1,j}-\delta_{i-1,j})\cdot B_i\cdot B_j.
\end{equation}
\end{definition}

We have similar result as Proposition \ref{propswapvir} for $\{\cdot , \cdot \}_{S2}$.
\begin{proposition}{\sc[Large N asymptotic]}
\label{propschvir}
Let $N>3$.
For $k =-\left[\frac{N-1}{2}\right],...,\left[\frac{N}{2}\right]$, let $$W_k = \frac{\mathcal{F}_{k} \mathbb{B} \cdot N}{16 \pi i}.$$
We have
\begin{equation}
\{W_p, W_q\}_{S2} = \{W_p, W_q\}_{N,\frac{8 \pi^2  }{3N},4-8N} + o\left(\frac{1}{N^2}\right).
\end{equation}
\end{proposition}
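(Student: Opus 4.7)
The plan is to follow the same strategy as in the proof of Proposition \ref{propswapvir}, taking advantage of the much simpler structure of $\{B_i, B_j\}_{S2}$, which is supported only on nearest-neighbor pairs $j = i \pm 1$. Since $\{B_i, B_j\}_{S2} = (\delta_{i+1,j} - \delta_{i-1,j}) B_i B_j$, bilinearity and the Leibniz rule collapse the Fourier-transformed bracket to a single cyclic sum
$$\{\mathcal{F}_p\mathbb{B}, \mathcal{F}_q\mathbb{B}\}_{S2} = (e^{-2\pi iq/N} - e^{-2\pi ip/N})\sum_{k=1}^N e^{-2\pi i(p+q)k/N}\, B_k B_{k+1},$$
which I then analyze asymptotically using the Hill-operator substitution from Proposition \ref{calH}, namely $B_k = 4 + 4H_k/N^2 + o(1/N^2)$, so that $B_k B_{k+1} = 16 + 16(H_k + H_{k+1})/N^2 + o(1/N^2)$, combined with the Taylor expansion
$$e^{-2\pi iq/N} - e^{-2\pi ip/N} = \frac{2\pi i(p-q)}{N} + \frac{2\pi^2(p^2-q^2)}{N^2} - \frac{4\pi^3 i(p^3-q^3)}{3N^3} + O(1/N^4).$$

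The argument then splits into two cases, exactly as in Proposition \ref{propswapvir}. When $p + q \neq 0$, the identity $\sum_k e^{-2\pi i(p+q)k/N} = 0$ kills the constant part of $B_k B_{k+1}$ and leaves only the Fourier coefficient of $H_k + H_{k+1}$, which I rewrite as a multiple of $\mathcal{F}_{p+q}\mathbb{B}$; after rescaling to $W_p = \mathcal{F}_p \mathbb{B} \cdot N/(16\pi i)$ the leading term becomes $(p-q)W_{p+q}$, matching the $(t_1, t_2, N)$-Virasoro bracket for $p \neq -q$. When $p = -q$ the constant contributions survive and I must retain the cubic term in the exponential: using the antisymmetric form $2i\sin(2\pi p/N) = 4\pi ip/N - 8\pi^3 ip^3/(3N^3) + O(1/N^5)$, together with the identity $\sum_k B_k B_{k+1} = 8\mathcal{F}_0\mathbb{B} - 16N + o(1/N)$ (which follows from $B_k B_{k+1} = 4(B_k + B_{k+1}) - 16 + o(1/N^2)$), I separate the result into a coefficient of $\mathcal{F}_0\mathbb{B}$, which yields $2p W_0$ after rescaling, and purely numerical terms, which yield $t_1 p^3 + t_2 p$. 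Matching against the definition of the Virasoro bracket identifies $t_1 = 8\pi^2/(3N)$ and $t_2 = 4 - 8N$.

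The main obstacle lies in the $p = -q$ case: several large quantities of order $N$ and $N^2$ coming separately from $-16NE$ and from $8E\mathcal{F}_0\mathbb{B}$ (through the leading $4N$ part of $\mathcal{F}_0\mathbb{B}$) must cancel precisely, leaving only the $O(N)$ cocycle contribution $t_2 p$ and the $O(1/N)$ cocycle contribution $t_1 p^3$. This cancellation is the direct analogue of the one in Proposition \ref{propswapvir}; however, the different support of the bracket (only nearest neighbors here, versus up to distance two there) and the change of normalization $W_k = V_k/2$ together produce the different Virasoro parameters $\left(\frac{8\pi^2}{3N},\, 4 - 8N\right)$ in place of $\left(\frac{8\pi^2}{N},\, 8N\right)$.
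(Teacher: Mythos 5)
Your proposal follows essentially the same route as the paper's proof: factor the Fourier-transformed bracket over its nearest-neighbor support, substitute the Hill-operator asymptotics $B_k = 4 + \tfrac{4H_k}{N^2} + o\!\left(\tfrac{1}{N^2}\right)$, Taylor-expand the exponential prefactor, split into the cases $p\neq -q$ and $p=-q$, convert the surviving sums back into $\mathcal{F}_{p+q}\mathbb{B}$ and $\mathcal{F}_0\mathbb{B}$, and rescale by $W_k = \tfrac{N}{16\pi i}\mathcal{F}_k\mathbb{B}$. Your shortcuts --- writing the $p=-q$ prefactor as $2i\sin(2\pi p/N)$ and using $B_kB_{k+1} = 4(B_k+B_{k+1}) - 16 + o\!\left(\tfrac{1}{N^2}\right)$ directly rather than passing through $\sum_k H_k$ and back --- are algebraically equivalent to the paper's manipulations and do not change the argument.
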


\begin{proof}

For $p ,q = -\left[\frac{N-1}{2}\right],...,\left[\frac{N}{2}\right]$, we have
\begin{equation}
\begin{aligned}
&\{\mathcal{F}_p \mathbb{B}, \mathcal{F}_q \mathbb{B}\}_{S2} = \sum_{k=1}^N  \left(e^{\frac{-2 p k \pi i}{N}} \cdot e^{\frac{-2 q (k+1) \pi i}{N}} - e^{\frac{-2 p (k+1) \pi i }{N}} \cdot e^{\frac{-2 q k \pi i}{N}}\right) \cdot   \left(B_{k} B_{k+1} \right)
\end{aligned}
\end{equation}
By $$B_k = a_k a_{k+1}= 4 + \frac{2 \left(H_k+ H_{k+1}\right) }{N^2}+ \frac{H_k H_{k+1}}{N^4}, $$
 we have
 $$B_k =  4 + \frac{4 H_k }{N^2}+ o\left(\frac{1}{N^2}\right).$$
 We have the above formula equals to

\begin{equation}
\begin{aligned}
&  \sum_{k=1}^N   e^{\frac{-2 (p+q) k \pi i}{N}} \cdot \left(\left(1 + \frac{-2 q  \pi i}{N} + \frac{- 2\pi^2 q^2}{N^2} + \frac{ 4\pi^3 q^3 i}{3 N^3}+  o\left(\frac{1}{N^3}\right)\right) -\left(1 + \frac{-2 p  \pi i}{N} +\right.\right.  \\& \left.\left. + \frac{- 2 \pi^2 \cdot p^2}{N^2} +\frac{ 4\pi^3 p^3 i}{3 N^3} + o\left(\frac{1}{N^3}\right)\right) \right)\cdot \left(16 + \frac{32 H_k}{N^2}+ o\left(\frac{1}{N^2}\right)\right)
\\&= \sum_{k=1}^N   e^{\frac{-2 (p+q) k \pi i}{N}} \cdot \left[ \frac{32 \pi i (p-q)}{N} + \frac{32 \pi^2 (p^2-q^2)}{N^2}- \frac{ 64\pi^3 (p^3-q^3) i}{ 3 N^3}+ \right.
\\& \left. + \frac{64 H_k (p-q)\pi i}{N^3}  + o\left(\frac{1}{N^3}\right) \right].
\end{aligned}
\end{equation}
When $p \neq -q$, since $\sum_{k=1}^N   e^{\frac{-2 (p+q) k \pi i}{N}} = 0$, by $B_k =  4 + \frac{4 H_k }{N^2}+ o\left(\frac{1}{N^2}\right)$, the above formula equals to
\begin{equation}
\sum_{k=1}^N   e^{\frac{-2 (p+q) k \pi i}{N}} \cdot \frac{16 B_k (p-q) \pi i }{N} = \frac{16 (p-q) \pi i}{N}  \cdot \mathcal{F}_{p+q} \mathbb{B} + o\left(\frac{1}{N^3}\right);
\end{equation}
When $p = -q$, since $\sum_{k=1}^N B_k = 0,$ the above formula equals to
\begin{equation}
\begin{aligned}
&\frac{64 p \pi i}{N} + \frac{128 p \pi i }{N^3}\sum_{k=1}^N H_k -\frac{128 p^3 \pi^3 i }{3N^2} + o\left(\frac{1}{N^3}\right)
\\& = \frac{64 p \pi i}{N} + \frac{32 p \pi i }{N}\sum_{k=1}^N (B_k-4) -\frac{128 p^3 \pi^3 i }{3N^2} + o\left(\frac{1}{N^3}\right)
\\& = \frac{64 p \pi i}{N} + \frac{32 p \pi i }{N}\mathcal{F}_0 \mathbb{B} - 128 p \pi i  -\frac{128 p^3 \pi^3 i }{3N^2} + o\left(\frac{1}{N^3}\right).
\end{aligned}
\end{equation}
Thus we have:
\begin{enumerate}
  \item for $p \neq -q$
$$\{W_p, W_q\}_{S2}  = \left(p-q\right)\cdot W_{p+q} + o\left(\frac{1}{N^2}\right) ;$$
  \item for $p = -q$
$$\{W_p, W_{-p}\}_{S2}  =  2p\cdot W_0 + (4-8N)\cdot p -  \frac{8 \pi^2  }{3N}\cdot  p^3  + o\left(\frac{1}{N^2}\right).$$
\end{enumerate}
We conclude that
\begin{equation}
\{W_p, W_q\}_{S2} = \{W_p, W_q\}_{N,\frac{8 \pi^2  }{3N},4-8N}  + o\left(\frac{1}{N^2}\right).
\end{equation}
\qed
\end{proof}
\section{Two Poisson structures are compatible}
With the evidences shown in Proposition \ref{propswapvir} \ref{propschvir}, we prove that $\{\cdot, \cdot\}_{\mathcal{C}_{2,N}}$ and $\{\cdot, \cdot\}_{S2}$ are compatible in this section.
Firstly, let us recall the traditional definition of the bihamiltonian system.
\begin{definition}{\sc[\cite{KW09}, p47, definition 4.19]}
 Two Poisson brackets $\{\cdot, \cdot\}_a$ and $\{\cdot, \cdot\}_b$ for a manifold $M$ are said to be compatible if and only if  for any $\lambda$, $\{\cdot, \cdot\}_a + \lambda \{\cdot, \cdot\}_b$ is Poisson.\\
 A dynamic system $\frac{d}{dt} m = \xi(m)$ over $M$ is bihamiltonian if its vector field $\xi$ is Hamiltonian with respect to these two Poisson brackets $\{\cdot, \cdot\}_a$ and $\{\cdot, \cdot\}_b$.
\end{definition}
Following the above definition, we make small modification for a ring $R \subset C^\infty(M, \mathbb{R})$.
\begin{definition}
 Two Poisson brackets $\{\cdot, \cdot\}_a$ and $\{\cdot, \cdot\}_b$ on a ring $R \subset C^\infty(M, \mathbb{R})$ are said to be compatible if and only if  for any $\lambda \in \mathbb{R}$, $\{\cdot, \cdot\}_a + \lambda \{\cdot, \cdot\}_b$ is Poisson.\\
\end{definition}
By definition, it is easy to verify that
\begin{proposition}
$\{ \cdot , \cdot \}_a$ and $\{ \cdot , \cdot \}_b$ are compatible if and only if for any $ x,y,z \in R$, we have
\begin{equation}
\begin{aligned}
&\{\{ x, y\}_a, z \}_b + \{\{ y, z\}_a, x \}_b+ \{\{ z, x\}_a, y \}_b + \nonumber \\ & +\{\{ x, y\}_b, z \}_a + \{\{ y, z\}_b, x \}_a + \{\{ z, x\}_b, y \}_a = 0
\end{aligned}
\end{equation}
\end{proposition}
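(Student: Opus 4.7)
The plan is to observe that the only axiom of a Poisson bracket which is not obviously preserved under forming the linear combination $\{\cdot,\cdot\}_a + \lambda\{\cdot,\cdot\}_b$ is the Jacobi identity, and then to expand the Jacobi identity for this combination as a polynomial in $\lambda$. The coefficients of $\lambda^0$ and $\lambda^2$ are, respectively, the Jacobiators of $\{\cdot,\cdot\}_a$ and $\{\cdot,\cdot\}_b$, both of which vanish because each is a Poisson bracket on $R$. What remains is the coefficient of $\lambda$, which is precisely the expression in the stated proposition.

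More concretely, first I would check that bilinearity, antisymmetry and the Leibniz rule for $\{\cdot,\cdot\}_a + \lambda\{\cdot,\cdot\}_b$ follow termwise from the corresponding properties of $\{\cdot,\cdot\}_a$ and $\{\cdot,\cdot\}_b$, so that $\{\cdot,\cdot\}_a + \lambda\{\cdot,\cdot\}_b$ is a Poisson bracket on $R$ if and only if it satisfies the Jacobi identity. Next, for arbitrary $x,y,z \in R$, I would write out
\begin{equation*}
\mathrm{Jac}_\lambda(x,y,z) := \{\{x,y\}_\lambda, z\}_\lambda + \{\{y,z\}_\lambda, x\}_\lambda + \{\{z,x\}_\lambda, y\}_\lambda,
\end{equation*}
where $\{\cdot,\cdot\}_\lambda := \{\cdot,\cdot\}_a + \lambda\{\cdot,\cdot\}_b$, and expand it as
\begin{equation*}
\mathrm{Jac}_\lambda(x,y,z) = \mathrm{Jac}_a(x,y,z) + \lambda\,M(x,y,z) + \lambda^2\,\mathrm{Jac}_b(x,y,z),
\end{equation*}
where $M(x,y,z)$ is the mixed expression appearing in the proposition. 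Since $\mathrm{Jac}_a \equiv 0$ and $\mathrm{Jac}_b \equiv 0$ by hypothesis, $\mathrm{Jac}_\lambda(x,y,z) = \lambda\,M(x,y,z)$.

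From this identity both directions are immediate. If $M(x,y,z) = 0$ for all $x,y,z \in R$, then $\mathrm{Jac}_\lambda \equiv 0$ for every $\lambda$, so $\{\cdot,\cdot\}_a + \lambda\{\cdot,\cdot\}_b$ is Poisson and the brackets are compatible. Conversely, if the brackets are compatible, then $\lambda\,M(x,y,z) = 0$ for every $\lambda \in \mathbb{R}$ and every triple $(x,y,z)$; specialising to $\lambda = 1$ yields $M(x,y,z) = 0$.

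The only step requiring any real care is the bookkeeping in the expansion of $\mathrm{Jac}_\lambda$: one has to confirm that the cross terms organise themselves exactly into the six-term sum $M(x,y,z)$ displayed in the proposition, with the correct signs and cyclic pattern. This is routine — each double bracket $\{\{x,y\}_\lambda, z\}_\lambda$ contributes four terms, one at each order $\lambda^0, \lambda^1, \lambda^1, \lambda^2$, and summing cyclically over $(x,y,z)$ collects precisely the three $\{\{\cdot,\cdot\}_a,\cdot\}_b$ and three $\{\{\cdot,\cdot\}_b,\cdot\}_a$ contributions. No obstacle of substance arises, which justifies the author's remark that the proposition is ``easy to verify''.
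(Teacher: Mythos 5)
Your proof is correct, and it is precisely the routine verification the paper has in mind when it says the proposition is ``easy to verify'' by definition (the paper supplies no written proof at all). The expansion of the Jacobiator of $\{\cdot,\cdot\}_a + \lambda\{\cdot,\cdot\}_b$ in powers of $\lambda$, with the $\lambda^0$ and $\lambda^2$ coefficients vanishing since both brackets are Poisson and the $\lambda^1$ coefficient giving the six-term mixed sum, together with specialising to $\lambda=1$ for the converse, is exactly the intended argument.
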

Our main result of this paper is the following.
\begin{theorem}{\sc[Main result]}
\label{thmcomswapsch}
For $N \geq 5$,
$\{\cdot, \cdot\}_{\mathcal{C}_{2,N}}$ and $\{\cdot, \cdot\}_{S2}$ are compatible on $\mathbb{R}(B_1,...,B_N)$.
\end{theorem}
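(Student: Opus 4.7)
The plan is to verify the mixed Jacobi identity on generators. By the proposition just recalled, compatibility amounts to
\begin{equation*}
\mathcal{M}(x,y,z):=\sum_{\mathrm{cyc}(x,y,z)}\Bigl(\{\{x,y\}_{\mathcal{C}_{2,N}},z\}_{S2}+\{\{x,y\}_{S2},z\}_{\mathcal{C}_{2,N}}\Bigr)=0
\end{equation*}
for all $x,y,z\in\mathbb{R}(B_1,\ldots,B_N)$. Since $\mathcal{M}$ is trilinear, antisymmetric, and a derivation in each argument (being the Schouten bracket of two bivectors applied to three functions), and since both brackets extend from the polynomial ring to its fraction field by the Leibniz rule, it suffices to establish $\mathcal{M}(B_i,B_j,B_k)=0$ for every triple of generators.

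The first reduction I would make is a locality argument. From Proposition \ref{propc2}, $\{B_i,B_j\}_{\mathcal{C}_{2,N}}$ vanishes unless the cyclic distance between $i$ and $j$ lies in $\{1,2\}$ modulo $N$; from the definition of $\{\cdot,\cdot\}_{S2}$, the analogous support is $\{1\}$. Expanding each double bracket appearing in $\mathcal{M}(B_i,B_j,B_k)$ by Leibniz produces a rational function whose $B_\ell^{\pm 1}$-support lies within distance two of $\{i,j,k\}$; in particular $\mathcal{M}(B_i,B_j,B_k)$ is identically zero unless $\{i,j,k\}$ fits inside a window of five consecutive integers modulo $N$, and the hypothesis $N\geq 5$ prevents spurious wrap-around identifications. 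Both brackets are manifestly invariant under the shift $B_k\mapsto B_{k+1}$, so after translating one reduces to the finite list of gap configurations
\begin{equation*}
(j-i,k-j)\in\{(1,1),(1,2),(1,3),(2,1),(2,2),(3,1)\},
\end{equation*}
together with the degenerate cases having a repeated index, which vanish by antisymmetry.

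The final step is the explicit computation. In each remaining configuration I substitute the formulas of Proposition \ref{propc2} together with $\{B_a,B_b\}_{S2}=(\delta_{a+1,b}-\delta_{a-1,b})B_aB_b$, expand the outer bracket by Leibniz, and sum the six cyclic contributions. A useful streamlining is to record Proposition \ref{propc2} in the normalized form $\{B_k,B_\ell\}_{\mathcal{C}_{2,N}}/(B_kB_\ell)$, which is a short Laurent polynomial in $B_{k-1},B_k,\ldots,B_\ell,B_{\ell+1}$; the analogue for $\{\cdot,\cdot\}_{S2}$ is simply $\pm 1$. The principal obstacle is combinatorial rather than conceptual: in the densest configurations $(1,1),(1,2),(2,1),(2,2)$ each cyclic summand splits into several nonzero Leibniz pieces, producing on the order of forty monomials to balance; organizing them by their $B_\ell^{\pm 1}$-support and exploiting the involution $k\mapsto -k$ that swaps the two nearest-neighbor directions should roughly halve the bookkeeping. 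Propositions \ref{propswapvir} and \ref{propschvir} make the result highly plausible a priori, since both brackets degenerate in the large-$N$ limit to the Virasoro bracket with distinct central parameters $(t_1,t_2)$, and any linear pencil of Virasoro brackets with varying central charge is automatically Poisson; one therefore expects the same cancellation to hold exactly at the discrete level.
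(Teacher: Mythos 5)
Your reduction scheme is sound and is essentially the one the paper uses: reduce to triples of generators via the fact that the mixed Jacobiator is a derivation in each slot, kill all but finitely many configurations by the locality of the two brackets, and quotient the survivors by the shift and reflection symmetries (the paper phrases this as normalizing $i=1$ and a minimal cyclic gap; your list of gap patterns is a correct, slightly redundant, superset of what can survive). The genuine gap is that you stop exactly where the content of the theorem lies: the cancellation in the surviving configurations is never exhibited, only asserted to be ``combinatorial rather than conceptual'' and plausible from the large--$N$ limit. That plausibility argument cannot be upgraded to a proof --- the paper explicitly notes that the analogous pair $\{\cdot,\cdot\}_{\mathcal{C}_{3,N}}$, $\{\cdot,\cdot\}_{S3}$ is \emph{not} compatible, even though the same Virasoro-limit heuristic applies there --- so the finite check is the theorem, and it must be done. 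It is also far shorter than the ``forty monomials'' you anticipate: for the pattern $(1,1)$ the six Leibniz contributions are
\begin{equation*}
(B_1-1)B_2B_3\;-\;(B_3-1)B_1B_2\;+\;0\;+\;\bigl(B_1(B_2B_3-B_2-B_3)-B_1B_3\bigr)\;+\;\bigl(B_1B_3-(B_1B_2-B_1-B_2)B_3\bigr)\;+\;0,
\end{equation*}
which visibly sum to zero, and the pattern $(1,2)$ reduces to a two-term cancellation $\tfrac{B_1B_2B_4}{B_3}-\tfrac{B_1B_2B_4}{B_3}$; the remaining patterns in your list vanish term by term.

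There is also a concrete error in your locality step: the claim that $N\geq 5$ ``prevents spurious wrap-around identifications'' fails at $N=5$. For the configuration $(j-i,k-j)=(1,2)$, i.e.\ the triple $(B_1,B_2,B_4)$, the third cyclic gap equals $2$ when $N=5$, so $\{B_4,B_1\}_{\mathcal{C}_{2,5}}=\{B_4,B_6\}_{\mathcal{C}_{2,5}}=-B_1B_4/B_5\neq 0$ and $\{B_5,B_1\}_{S2}\ne 0$ becomes relevant; two additional Leibniz terms $\pm B_1B_2B_4/B_5$ appear that are absent for $N\geq 6$. A computation carried out ``uniformly in $N$'' as you describe would miss them; the paper isolates $N=5$ as a separate sub-case and checks that these extra terms also cancel. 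Your write-up needs the same split (or a convention that forces all indices mod $N$ throughout the Leibniz expansion) before the case list can be called exhaustive.
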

\begin{proof}

Let
\begin{equation}
\begin{aligned}
&K(B_i,B_j,B_k) : = \{\{ B_i, B_j\}_{\mathcal{C}_{2,N}}, B_k \}_{S2} + \{\{ B_j, B_k\}_{\mathcal{C}_{2,N}}, B_i \}_{S2}+ \{\{ B_k, B_i\}_{\mathcal{C}_{2,N}}, B_j \}_{S2}
\\&+ \{\{ B_i, B_j\}_{S2}, B_k \}_{\mathcal{C}_{2,N}} + \{\{ B_j, B_k\}_{S2}, B_i \}_{\mathcal{C}_{2,N}}+ \{\{ B_k, B_i\}_{S2}, B_j \}_{\mathcal{C}_{2,N}}.
\end{aligned}
\end{equation}
By definition, we have to check that
$$
K(B_i,B_j,B_k)  = 0
$$
for any $i,j,k =1,...,N$.
Since $$K(B_i,B_j,B_k) = -K(B_j,B_i,B_k),$$
when some indexes coincide, for example $i = j$, we have
$$
K(B_i,B_i,B_k)  = 0.
$$
Let $\sigma_s$ be the permutation of the $N$ indexes such that $\sigma(l) = l+s$. The permutation $\sigma_s$ induce a ring automorphism $\chi_s$ of $\mathbb{R}(B_1,...,B_N)$ such that $$\chi_s(B_l) = B_{l+s}$$ for $l=1,...,N$.
Moreover, we have
$$\{\chi_s(B_i),\chi_s(B_j)\}_{S2} = \chi_s(\{B_i,\;B_j \}_{S2})$$
and
$$\{\chi_s(B_i),\chi_s(B_j)\}_{\mathcal{C}_{2,N}} = \chi_s(\{B_i,\;B_j \}_{\mathcal{C}_{2,N}}).$$
 Let $\tau$ be the permutation of the $N$ indexes such that $$\tau(l) = N+1-l$$ for $l=1,...,N$. The permutation $\tau$ induce a ring automorphism $\nu$ of $\mathbb{R}(B_1,...,B_N)$ such that $$\nu(B_l) = B_{N+1-l}.$$
Moreover, we have
$$\{\nu(B_i),\nu(B_j)\}_{S2} = -\nu(\{B_i,\;B_j \}_{S2}),$$
$$\{\nu(B_i),\nu(B_j)\}_{\mathcal{C}_{2,N}} = -\nu(\{B_i,\;B_j \}_{\mathcal{C}_{2,N}}).$$
 By the above symmetry, we suppose that $$i=1$$ and $$1  < j < k \leq N .$$ Let $$l := \min\{|j-i|,\; |j-i-N|,\; |k-j|,\; |k-j-N|,\; |i-k|,\; |i-k-N|\}.$$ We suppose that $l = |j-1|$, we have to verify the following cases:
\begin{enumerate}
\item When $1<j-1< k-2< N-1$, we have
\begin{equation}
\begin{aligned}
&K(B_1,B_i,B_k)
\\&=\{\{ B_1, B_j\}_{\mathcal{C}_{2,N}}, B_k \}_{S2} + \{\{ B_j, B_k\}_{\mathcal{C}_{2,N}}, B_1 \}_{S2}+ \{\{ B_k, B_1\}_{\mathcal{C}_{2,N}}, B_j \}_{S2}
\\&+ \{\{ B_1, B_j\}_{S2}, B_k \}_{\mathcal{C}_{2,N}} + \{\{ B_j, B_k\}_{S2}, B_1 \}_{\mathcal{C}_{2,N}}+ \{\{ B_k, B_1\}_{S2}, B_j \}_{\mathcal{C}_{2,N}}
\\&= \{\{ B_1, B_j\}_{\mathcal{C}_{2,N}}, B_k \}_{S2} + \{\{ B_j, B_k\}_{\mathcal{C}_{2,N}}, B_1 \}_{S2}+ \{\{ B_k, B_1\}_{\mathcal{C}_{2,N}}, B_j \}_{S2}.
\end{aligned}
\end{equation}
Since $$\{ B_1, B_j\}_{\mathcal{C}_{2,N}}$$ is a polynomial of $B_1,...,B_j$, we have $$\{\{ B_1, B_j\}_{\mathcal{C}_{2,N}}, B_k \}_{S2} = 0.$$ Similarly, we have $$\{\{ B_j, B_k\}_{\mathcal{C}_{2,N}}, B_1 \}_{S2} = 0$$ and  $$\{\{ B_k, B_1\}_{\mathcal{C}_{2,N}}, B_j \}_{S2}=0.$$
We conclude that $$K(B_1,B_i,B_k) = 0.$$
  \item
  When $j = 2$, $k = 3$,
  we have
\begin{equation}
\begin{aligned}
&K(B_1,B_2,B_3)
\\&=\{\{ B_1, B_2\}_{\mathcal{C}_{2,N}}, B_3 \}_{S2} + \{\{ B_2, B_3\}_{\mathcal{C}_{2,N}}, B_1 \}_{S2}+ \{\{ B_3, B_1\}_{\mathcal{C}_{2,N}}, B_2 \}_{S2}
\\&+ \{\{ B_1, B_2\}_{S2}, B_3 \}_{\mathcal{C}_{2,N}} + \{\{ B_2, B_3\}_{S2}, B_1 \}_{\mathcal{C}_{2,N}}+ \{\{ B_3, B_1\}_{S2}, B_2 \}_{\mathcal{C}_{2,N}}
\\&= \{B_1 B_2 - B_1 - B_2 , B_3 \}_{S2} + \{B_2 B_3 - B_2 - B_3 , B_1 \}_{S2}+ \{\frac{B_3 B_1}{B_2}, B_2 \}_{S2}
\\&+ \{  B_1  B_2 , B_3 \}_{\mathcal{C}_{2,N}} + \{ B_2  B_3 , B_1 \}_{\mathcal{C}_{2,N}}
\\&= (B_1-1)B_2 B_3 - (B_3-1)B_1 B_2 + B_1 (B_2 B_3 -B_2- B_3) \\&- \frac{B_1 B_3}{B_2} B_2- (B_1 B_2 - B_1 - B_2)B_3 + \frac{B_3 B_1}{B_2} B_2
\\&=0.
\end{aligned}
\end{equation}
  \item
When $j =2$, $k = 4$ and $N>5$,  we have
\begin{equation}
\begin{aligned}
&K(B_1,B_2,B_4)
\\&=\{\{ B_1, B_2\}_{\mathcal{C}_{2,N}}, B_4 \}_{S2} + \{\{ B_2, B_4\}_{\mathcal{C}_{2,N}}, B_1 \}_{S2}+ \{\{ B_4, B_1\}_{\mathcal{C}_{2,N}}, B_2 \}_{S2}
\\&+ \{\{ B_1, B_2\}_{S2}, B_4 \}_{\mathcal{C}_{2,N}} + \{\{ B_2, B_4\}_{S2}, B_1 \}_{\mathcal{C}_{2,N}}+ \{\{ B_4, B_1\}_{S2}, B_2 \}_{\mathcal{C}_{2,N}}
\\&= \{B_1 B_2 - B_1 - B_2 , B_4 \}_{S2} + \{-\frac{B_2 B_4}{B_3}, B_1 \}_{S2} + \{  B_1  B_2 , B_4 \}_{\mathcal{C}_{2,N}}
\\&= \frac{B_1 B_2 B_4}{B_3} - \frac{B_1 B_2 B_4}{B_3}
\\&=0.
\end{aligned}
\end{equation}
  \item
When $j =2$, $k = 4$ and $N=5$,  we have
\begin{equation}
\begin{aligned}
&K(B_1,B_2,B_4)
\\&=\{\{ B_1, B_2\}_{\mathcal{C}_{2,N}}, B_4 \}_{S2} + \{\{ B_2, B_4\}_{\mathcal{C}_{2,N}}, B_1 \}_{S2}+ \{\{ B_4, B_1\}_{\mathcal{C}_{2,N}}, B_2 \}_{S2}
\\&+ \{\{ B_1, B_2\}_{S2}, B_4 \}_{\mathcal{C}_{2,N}} + \{\{ B_2, B_4\}_{S2}, B_1 \}_{\mathcal{C}_{2,N}}+ \{\{ B_4, B_1\}_{S2}, B_2 \}_{\mathcal{C}_{2,N}}
\\&= \{B_1 B_2 - B_1 - B_2 , B_4 \}_{S2} + \{-\frac{B_2 B_4}{B_3}, B_1 \}_{S2}+ \{-\frac{B_4 B_1}{B_5}, B_2 \}_{S2}
\\&+ \{  B_1  B_2 , B_4 \}_{\mathcal{C}_{2,N}}
\\&= \frac{B_1 B_2 B_4}{B_3} - \frac{B_1 B_4 B_2}{B_5}- \frac{B_1 B_2 B_4}{B_3} + \frac{B_1 B_4 B_2}{B_5}
\\&=0.
\end{aligned}
\end{equation}
\end{enumerate}
We conclude that for $N \geq 5$,
$\{\cdot, \cdot\}_{\mathcal{C}_{2,N}}$ and $\{\cdot, \cdot\}_{S2}$ are compatible on $\mathbb{R}(B_1,...,B_N)$.
\qed
\end{proof}
\begin{acknowledgements}
The author is very grateful to Fran\c cois Labourie for suggesting the subject and for the guidance. He thanks Vladimir Fock for very interesting conversations and useful comments on this work. He is grateful to University of Paris-Sud, Max Planck Institute for Mathematics and Yau Mathematical Sciences Center for their hospitality.
\end{acknowledgements}

\end{document}